\documentclass[11pt]{article}
\usepackage[letterpaper,textwidth=6.25in,textheight=8.9in,centering]{geometry}
\usepackage{lmodern}
\usepackage[T1]{fontenc}
\input{glyphtounicode}
\usepackage{amsmath,amssymb,amsthm}
\usepackage{microtype}
\usepackage{enumitem}
\setlist[enumerate]{beginpenalty=10000}
\usepackage{xurl}
\usepackage[hidelinks]{hyperref}
\hypersetup{pdftitle={A lattice order on the complex field},
  pdfauthor={Hanzhen Bao; Yuehui Zhang},
  pdfkeywords={lattice-ordered field, positive cone, Birkhoff--Pierce problem, valuation},
  bookmarksnumbered=true,pdfdisplaydoctitle=true}
\newtheorem*{mainthm}{Main Theorem}

\theoremstyle{plain}
\newtheorem{theorem}{Theorem}[section]
\newtheorem{proposition}[theorem]{Proposition}
\newtheorem{lemma}[theorem]{Lemma}
\newtheorem{corollary}[theorem]{Corollary}
\newtheorem{question}[theorem]{Question}
\theoremstyle{definition}
\newtheorem{definition}[theorem]{Definition}
\newtheorem{example}[theorem]{Example}
\theoremstyle{remark}
\newtheorem{remark}[theorem]{Remark}

\newcommand{\Q}{\mathbb{Q}}
\newcommand{\R}{\mathbb{R}}
\newcommand{\C}{\mathbb{C}}
\newcommand{\Z}{\mathbb{Z}}
\newcommand{\N}{\mathbb{N}}
\DeclareMathOperator{\Sat}{Sat}
\newcommand{\doi}[1]{doi:\ \href{https://doi.org/#1}{\nolinkurl{#1}}}

\makeatletter
\renewcommand{\@seccntformat}[1]{\csname the#1\endcsname.\quad}
\makeatother
\title{\bfseries A lattice order on the complex field}
\author{Hanzhen Bao\qquad Yuehui Zhang}
\date{}
\begin{document}
\maketitle
\begin{abstract}
Birkhoff and Pierce asked in 1956 whether the field $\C$ of complex numbers admits a lattice order compatible with addition and multiplication. We answer the question affirmatively. Using a valuation with value group $\Q$, we construct a positive cone $P$ by a transfinite recursion of countable ordinal length. At each stage we extend a lattice order on a subspace while preserving its lattice operations in the whole field and maintaining multiplicative closure of the ambient positive cone. These subspaces eventually exhaust $\C$. The construction depends on choices of a valuation extension, compatible rational powers and vector-space bases. The cone $P$ contains $1$ and satisfies
$\{x\in P\setminus\{0\}:x^{-1}\in P\}=\Q_{>0}$.
The induced order is not total: $i$ is incomparable with $0$. For a chosen transcendental element $t$ and $i^2=-1$, one may prescribe
$0\vee i=(t+i)/2$ and $0\vee(-i)=(t-i)/2$.
Varying the first extension yields exactly $2^{\mathbf c}$ distinct lattice positive cones on $\C$, where $\mathbf c=|\C|$. The count concerns cones as subsets of $\C$.
\end{abstract}
{\small\noindent\textbf{Key words and phrases.} Lattice-ordered field; positive cone; Birkhoff--Pierce
problem; valuation; transcendence basis.\par}
{\small\noindent\textbf{2020 Mathematics Subject Classification.} Primary 12J15; Secondary 06F25, 12J20.\par}
\section{Introduction}\label{sec:intro}

A \emph{ring order} on a field $L$ is a translation-invariant partial order for which a product of nonnegative elements is nonnegative. It is determined by a subset $P\subseteq L$ satisfying
\[
0\in P,\qquad P+P\subseteq P,\qquad PP\subseteq P,\qquad P\cap(-P)=\{0\}.
\]
Such an order is \emph{directed} if $P-P=L$. It is a \emph{lattice order} if every pair has a least upper bound and a greatest lower bound; in that case it is automatically directed, and $(L,\le_P)$ is a \emph{lattice-ordered field}.

\subsection*{The problem}

\begin{quote}\itshape
\textup{Birkhoff--Pierce problem (1956).} Does $\C$ admit a lattice order compatible with its ring operations?
\end{quote}

Birkhoff and Pierce posed this question in their foundational paper on lattice-ordered rings \cite{BP}. They excluded lattice orders compatible with the usual $\R$-algebra structure of $\C$, leaving the ring-order problem open. Fuchs placed such questions in the broader study of partially ordered algebraic systems \cite{Fuchs}. Conrad and Dauns constructed non-total lattice-ordered fields and established a Hahn-type embedding theorem under additional structural hypotheses \cite{CD}. Wilson subsequently constructed $2^{\mathbf c}$ non-total lattice orders on $\R$, where $\mathbf c=|\R|$ \cite{W}.

Two restrictions are relevant to the complex case. First, $\C$ cannot be totally ordered as a field: totality would make every square nonnegative, contrary to $i^2=-1$. Second, Ma's obstruction excludes directed ring orders on $\overline{\Q}$ \cite{Ma}. Thus a directed order on $\C$ cannot restrict to a directed order on $\overline{\Q}$; the restriction in our construction is not directed.

Schwartz and Yang established the existence of directed ring orders on $\C$ \cite{SY} and later gave a general existence criterion \cite{SY23}. The construction below preserves lattice operations throughout the extension process. The background and related existence questions are discussed in \cite{Hen,Y}.

\subsection*{Cones over a fixed coefficient field}

Another line of work fixes a non-Archimedean totally ordered field $F$ and studies positive cones on $F(i)$ satisfying $F_{>0}P\subseteq P$. This is compatibility with the prescribed positive scalar action. If $1\in P$, this scalar condition is equivalent to $F_{\ge0}\subseteq P$, and the induced order on $F$ is then the prescribed total order. The scalar condition alone, without $1\in P$, does not assert this restriction property.

Ma, Wu and Zhang constructed such cones \cite{MWZ17} and classified the cases $1>0$ and $1\not>0$ by admissible semigroups and special convex sets, respectively \cite{MWZ18,MWZ19}. Xu and Zhang unified these descriptions by doubly convex sets and extended the analysis to quaternion algebras \cite{XZ20}. The non-lattice conclusions concern the directed $F$-algebra orders in that prescribed scalar-compatibility framework.

Wang, Yuan, Zhang and Zhu recently gave a construction of directed cones on $\C$ using integral closures of local rings \cite{WYZZ}. Their cones are closed under inversion of nonzero positive elements and are not lattice cones. Section~\ref{sub:previous} compares their restriction and inversion properties with those of our cones.

\subsection*{The result}

\begin{mainthm}\label{mainthm}
The field $\C$ carries a lattice positive cone $P$. Moreover $P$ can be chosen so that
\[
1\in P,\qquad \Q_{>0}\subseteq P,\qquad P\cap\Q=\Q_{\ge0},\qquad
\{x\in P\setminus\{0\}:\ x^{-1}\in P\}=\Q_{>0},
\]
and so that, for $i^{2}=-1$ and $p=\frac12(t+i)$, $q=\frac12(t-i)$, where $t$ is a fixed element
of a transcendence basis of $\C$ over $\Q$,
\[
0\vee i=p,\qquad 0\vee(-i)=q,\qquad p\wedge q=0.
\]
There are $2^{\mathbf c}$ distinct cones satisfying these conditions, where $\mathbf c=|\C|$. For each such cone, $\Q$ is the only subfield of $\C$ on which the induced order is total.
\end{mainthm}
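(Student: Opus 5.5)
The plan is to realize $P$ as the nonnegative cone of a suitable Hamel basis. Specifically, I will aim for a $\Q$-basis $B$ of $\C$ with the following properties:
\begin{itemize}
\item[(a)] $1,p,q\in B$;
\item[(b)] $B\cdot B\subseteq P_B:=\{\sum_{b}\lambda_b b:\lambda_b\in\Q_{\ge0}\}$, with finitely many nonzero terms.
\end{itemize}
Given such a $B$, the order $\le_{P_B}$ is coordinatewise with respect to $B$, so it is a lattice order. Condition (b) together with bilinearity gives $P_BP_B\subseteq P_B$, so $P_B$ is a lattice positive cone. The prescribed identities then come for free. Since $p\neq q$ are distinct basis vectors, $p\wedge q=0$. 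Since $i=p-q$, we get $0\vee i=p$ and $0\vee(-i)=q$. Since $1\in B$, we get $\Q\cap P_B=\Q_{\ge0}$.

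I expect the main obstacle to be building $B$ while keeping condition (b), since products of new basis elements must expand with nonnegative coefficients. In the spirit of the abstract, I would not build $B$ all at once. Instead I would build an increasing chain of $\Q$-subspaces $V_\alpha$ with bases $B_\alpha$, indexed by a countable ordinal. Each $V_\alpha$ should be lattice-ordered by $B_\alpha$, with lattice operations agreeing with those of the final cone. The invariant is that $B_\alpha B_\alpha$ is expanded nonnegatively in the basis of the ambient space built so far.

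To control signs, I would first fix a valuation $v$ on $\C$ with value group $\Q$ and residue field containing $\Q$. I would start from the $t$-adic valuation on $\Q(t)$ and extend it along the transcendence basis and the algebraic closure. I would also choose a compatible system of rational powers $s^{r}$, $r\in\Q$, of a uniformizer-like element built from $p$ and $q$. Note that $pq=(t^2+1)/4$ and $p-q=i$, so $p$ and $q$ have controlled valuations.

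At each stage, a new element $x$ is written as leading monomial plus higher-order terms. I would then adjoin to $B$ elements of the shape (monomial)$\cdot$(positive correction), chosen so that every product expands into basis elements with nonnegative rational coefficients. Nonnegativity of the expansion should be read off from leading terms of strictly increasing valuation. This is the step I would verify most carefully: first for the starting subspace $\Q(t)+\Q(t)i$, then across algebraic extensions using the chosen rational powers, and finally across transcendental elements of the basis, at limit stages taking unions. Exhaustion of $\C$ follows because every element lies in some finitely generated stage.

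For the remaining claims:
\begin{itemize}
\item \emph{Units.} Suppose $x,x^{-1}\in P\setminus\{0\}$. Nonnegative combinations of basis monomials have a leading term determined by the minimum valuation, with no cancellation. Applying this to $xx^{-1}=1$ forces $v(x)=0$ and forces $x$ to be a single scalar multiple of the basis element $1$. Hence $x\in\Q_{>0}$.
\item \emph{Totality.} Let $K\neq\Q$ be a subfield and take $x\in K\setminus\Q$. Totality on $K$ would make $x^2+1$, and $(x-r)^2$ for every $r\in\Q$, comparable to $0$. I would derive a contradiction from the unit statement or from a coordinate computation showing that some $x-r$ has basis coordinates of both signs.
\item \emph{Counting.} At the first extension there are $\mathbf c$ independent binary choices, for instance the sign pattern used for the basis element attached to each element of the transcendence basis other than $t$. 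These choices produce distinct cones as subsets, which gives at least $2^{\mathbf c}$. The upper bound is trivial since $|2^{\C}|=2^{\mathbf c}$.
\end{itemize}
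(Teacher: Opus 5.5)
\textbf{There is a genuine gap: the existence of the basis $B$, which carries the whole theorem, is not established, and the mechanism you propose cannot work for a cone of the form $P_B$.}

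Your reduction itself is sound. If $B$ is a $\Q$-basis of $\C$ with $1,p,q\in B$ and $BB\subseteq P_B$, then $P_B$ is a lattice positive cone with $P_B\cap\Q=\Q_{\ge0}$, $p\wedge q=0$, $0\vee i=p$ and $0\vee(-i)=q$. The trouble starts with the fact that a coordinatewise cone $P_B$ is Archimedean. If $x\in P_B$ and $0\ne y\in P_B$, then $x-ny\notin P_B$ for large $n$, even though $x-ny$ has the same leading term as $x$ whenever $v(y)>v(x)$. So positivity in $P_B$ is never ``read off from leading terms''; it depends on every coordinate. Moreover, once a product $bb'$ lies in the span already built, its coordinates are fixed and later choices cannot repair them. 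Your invariant speaks of expanding $B_\alpha B_\alpha$ in ``the ambient space built so far'', but $V_\alpha$ is only a subspace and such products generally leave it.

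The missing idea is the one the paper is built on. The ambient cone $P_\alpha$ is defined on all of $\C$ and contains the \emph{entire} layers $H_s$ for $s$ above the cut in a well-ordered monoid $S_\alpha$. The new directions $t^h$ and $t^h+x_j$ of Proposition~\ref{prop:succ} all lie in $H_h$, so their products land in $H_{2h}$ or $H_{h+r}$ and are positive whatever their lower-order terms. When the cut reaches a monoid element $a$, Proposition~\ref{prop:limit} absorbs all of $H_a$ by a lexicographic extension. Theorem~\ref{thm:global} then keeps earlier reduced decompositions valid in the whole field. The paper's cone is therefore non-Archimedean: $t^2-n\in H_2\subseteq P$ for every $n$. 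So it is not of the form $P_B$ for any basis, and nothing in your outline shows that a coordinatewise lattice order of your type exists on $\C$ at all.

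Several secondary steps also fail as written.
\begin{enumerate}
\item[(i)] \emph{Valuation choice.} With the $t$-adic valuation ($v(t)=1$), $p$ and $q$ are units with residues $\pm i/2$, and $p+q=t$ has strictly larger valuation. So your ``no cancellation'' principle already fails on the basis elements $p,q$. Also $p$ is a nonrational positive element of valuation $0$, so the inference from $v(x)=0$ to $x\in\Q_{>0}$ is unavailable. The paper uses the valuation at infinity ($v(t)=-1$, Lemma~\ref{lem:exist}). Then $p,q\in H_1$ and every nonrational positive element has height $\ge1$, so $d(x^{-1})<0$ excludes $x^{-1}\in P$.
\item[(ii)] \emph{Exhaustion.} This does not fit a countable recursion: a countable chain of finitely generated stages has countable union. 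Likewise, adjoining the $\mathbf c$ elements of a transcendence basis cannot be done in countably many steps. The paper's recursion is countable only because it runs over rational height cuts, with uncountable-dimensional stages satisfying $L_{<a_\beta}\subseteq U_\beta$.
\item[(iii)] \emph{Counting.} If your binary choices are sign changes of basis elements, replacing $b$ by $-b$ gives $(-b)b'$ a negative coordinate for every $b'\in B\setminus\{1,b\}$, destroying $BB\subseteq P_B$. The paper instead varies the representatives in the directions $t^h\pm x_j$, all lying in $H_h$, and checks that each choice continues (Corollary~\ref{cor:many}).
\end{enumerate}
Your totality claim does follow from a unit statement, since in a totally ordered field $e>0$ gives $e^{-1}=e\,(e^{-1})^2>0$. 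But that unit statement is exactly what your sketch does not establish.
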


\noindent
Incomparability of $i$ with $0$ is already determined in a three-dimensional subspace. The existence assertion follows from Theorem~\ref{thm:C}; Corollary~\ref{cor:many} gives the stated number of cones, and Corollary~\ref{cor:total} gives the assertion about totally ordered subfields.

The main theorem also disproves the conjecture in \cite{WYZZ} that $\C$ has no compatible lattice order. Under the support condition used here, inversion closure is incompatible with directedness (Remark~\ref{rem:inversion}).

In a recent preprint, He \cite{He26} presents a construction of a compatible lattice order on the complex field, using a coordinate order on a countable algebraically closed subfield followed by an ultrapower construction and transport through a field isomorphism.

The construction and the proofs of the results stated in our main theorem were completed before the authors became aware of He's preprint. They were developed independently and do not use any results or arguments from that work. The discussion of He's preprint, including the comparison in Section~\ref{sub:previous}, was added subsequently.

Our construction proceeds directly on the fixed field and yields $2^{\mathbf c}$ distinct lattice positive cones whose only nonzero positive elements with positive inverses are the positive rational numbers. The resulting orders are not isomorphic as ordered fields to those arising from He's ultrapower construction; see Section~\ref{sub:previous}.

\subsection*{How the cone is built}

Write $L=\C$. Fix a transcendence basis $T$ of $L$ over $\Q$, choose $t\in T$, and put $F_0=\Q(T\setminus\{t\})$. The valuation at infinity of $F_0(t)$ extends to a valuation $v$ on $L$ with value group $\Q$. Choose compatible rational powers $t^r$, $r\in\Q$. For the height $d=-v$, use the selected layers
\[
H_s=\{ct^s+u:c\in\Q_{>0},\ d(u)<s\}.
\]
Each $H_s$ consists of elements whose class in $L_{\le s}/L_{<s}$ is a positive rational multiple of the class of $t^s$; no order on the whole residue field is imposed.

The cone is an increasing union of positive cones $P_\alpha$ on the whole field. At stage $\alpha$, a lattice cone $A_\alpha=P_\alpha\cap U_\alpha$ is defined on a subspace $U_\alpha\subseteq L_{\le a_\alpha}$. At every noninitial stage it also satisfies $L_{<a_\alpha}\subseteq U_\alpha$. The subspace need not be all of $L_{\le a_\alpha}$. Outside it, $P_\alpha$ consists of the layers with indices $s>a_\alpha$ in a well-ordered additive monoid $S_\alpha$, generated by $D_\alpha\subseteq\Q\cap[1,a_\alpha]$.

At a successor step choose a rational height $h$ between the current cut and the next element of $S_\alpha$. A basis $(x_j+U_\alpha)$ of $L_{<h}/U_\alpha$ supplies new coordinate directions $t^h$ and $t^h+x_j$. At a finite limit cut $a$, the previous cones have an increasing union. If $a$ belongs to the union height monoid, the already positive layer $H_a$ supplies a lexicographic extension along $\Q t^a$; otherwise the union of the earlier subspaces suffices. In both cases the new whole-field cone is exactly the union of the earlier cones.

The cuts increase strictly, and successor steps and finite-cut limit steps are always available. Since distinct successor stages have distinct rational cuts, the recursion stops at a countable limit ordinal at which the cuts become unbounded (Theorem~\ref{thm:main}). The subspaces then exhaust $\C$. Coordinate and lexicographic extensions preserve the old reduced decompositions locally, and globalisation at each new stage preserves them in the whole field. Every element therefore acquires a positive part.

The construction depends on noncanonical choices and does not provide an algorithm for computing the positive part of an arbitrary complex number.

\subsection*{Why $-1=i^{2}$ is no obstruction}

The cone $P$ satisfies $PP\subseteq P$ and $1\in P$, yet $i$ is incomparable with $0$. Multiplicative closure implies $x^2\in P$ whenever $x\in P$ or $-x\in P$, but imposes no such requirement on elements incomparable with $0$. Thus $i^2=-1\notin P$ is consistent with the lattice order.

\subsection*{Organisation}

Section~\ref{sec:cones} collects the elementary theory of cones, of rational saturation and of
reduced decompositions. Section~\ref{sec:ext} proves the three extension principles
(globalisation, the preservation criterion, increasing unions) and the coordinate and lexicographic extensions used
in the recursion. Section~\ref{sec:val} introduces the valuation, the height filtration and the
well-ordered monoids of heights. Section~\ref{sec:constr} contains the construction and the
theorem on an arbitrary field with such a valuation. Section~\ref{sec:C} specialises to $\C$,
discusses the resulting order and compares it with the cones available in the literature.

\section{Cones and reduced decompositions}\label{sec:cones}

\subsection{Positive cones}\label{sub:cones}

Throughout this paper, $L$ denotes a field of characteristic $0$; in particular $L$ is a vector space over $\Q$. We write $\N_0=\{0,1,2,\ldots\}$ and $\N_{>0}=\{1,2,\ldots\}$. A subset $C\subseteq V$ of a $\Q$-vector space $V$ is a \emph{cone} if
\begin{equation}\label{eq:cone}
0\in C,\qquad C+C\subseteq C,\qquad qC\subseteq C\quad\text{for all }q\in\Q_{\ge0}.
\end{equation}
It is \emph{pointed} if $C\cap(-C)=\{0\}$, and \emph{directed} if $C-C=V$. In a field, a \emph{positive cone} contains $0$, is pointed and is closed under addition and multiplication (Definition~\ref{def:dpo}). Directedness and rational closure are additional properties; both follow when the induced order is a lattice order.

\begin{definition}\label{def:dpo}
A \emph{positive cone} in a field $L$ is a subset $P\subseteq L$ such that
\begin{equation}\label{eq:dpo}
0\in P,\qquad P+P\subseteq P,\qquad P\cdot P\subseteq P,\qquad P\cap(-P)=\{0\}.
\end{equation}
For a positive cone $P$ the relation
\begin{equation}\label{eq:order}
x\le_{P}y\quad:\Longleftrightarrow\quad y-x\in P
\end{equation}
is a partial order which is invariant under translations and compatible with multiplication, in the sense that $a,b\ge_{P}0$ implies $ab\ge_{P}0$. Conversely, every partial order on $L$ with these two properties arises in this way from the positive cone $P=\{x\in L: x\ge_{P}0\}$. A positive cone with $P-P=L$ is \emph{directed}; then $\le_{P}$ is a directed order and $(L,P)$ is a \emph{directed partially ordered field}. A positive cone which is a cone in the sense of \eqref{eq:cone} is \emph{rationally closed}; a directed positive cone need not be rationally closed, and a rationally closed positive cone need not be directed. We identify positive cones with such partial orders, write $x<_{P}y$ when $y-x\in P\setminus\{0\}$, and call $(L,P)$ a \emph{partially ordered field} when $P$ is directed.
\end{definition}

\begin{definition}\label{def:lattice}
Let $C\subseteq V$ be an additive submonoid of a $\Q$-vector space $V$, with $C\cap(-C)=\{0\}$ and $C-C=V$; rational closure is not assumed at this point. If $\le_{C}$ is a lattice order on $V$, that is, if every pair of elements of $V$ has a least upper bound $x\vee y$ and a greatest lower bound $x\wedge y$, then $C$ is a \emph{lattice cone}; by Lemma~\ref{lem:latsat} such a cone is automatically rationally closed. A positive cone $P$ in $L$ which is directed and whose order $\le_{P}$ is a lattice order is called a \emph{lattice positive cone}, and $(L,P)$ is then a \emph{lattice-ordered field}. For $x\in V$ we put
\begin{equation}\label{eq:parts}
x^{+}=0\vee x,\qquad x^{-}=0\vee(-x)
\end{equation}
whenever these suprema exist.
\end{definition}

\begin{remark}\label{rem:nototal}
The field $\C$ admits no total order compatible with its ring operations: in a totally ordered field every square is nonnegative, whereas $i^2=-1<0$. The Birkhoff--Pierce question concerns lattice orders, which need not be total.
\end{remark}

\begin{question}[Birkhoff--Pierce \cite{BP}]\label{q:bp}
Does $\C$ carry a lattice positive cone?
\end{question}

\subsection{Rational saturation}\label{sub:sat}

\begin{lemma}\label{lem:sat}
Let $P$ be a positive cone in $L$ and set
\begin{equation}\label{eq:sat}
\Sat(P)=\{x\in L:\ nx\in P\text{ for some }n\in\N_{>0}\}.
\end{equation}
Then $\Sat(P)$ is a rationally closed positive cone containing $P$, it is directed whenever $P$ is, it is the smallest rationally closed positive cone containing $P$, and $\Sat(\Sat(P))=\Sat(P)$. In particular $P=\Sat(P)$ if and only if $P$ is rationally closed.
\end{lemma}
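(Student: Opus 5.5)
The plan is to verify the claims one at a time, directly from \eqref{eq:dpo} and \eqref{eq:sat}, using only that $L$ has characteristic $0$, so $n\in\N_{>0}$ is invertible in $L$.

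\emph{Step 1: $\Sat(P)$ is a positive cone containing $P$.} Taking $n=1$ gives $P\subseteq\Sat(P)$, and in particular $0\in\Sat(P)$. If $nx\in P$ and $my\in P$, then
\[
nm(x+y)=m(nx)+n(my)\in P \quad\text{and}\quad nm(xy)=(nx)(my)\in P,
\]
using $P+P\subseteq P$ (so $kP\subseteq P$ for $k\in\N_{>0}$) and $PP\subseteq P$. For pointedness, suppose $x$ and $-x$ both lie in $\Sat(P)$, say $nx\in P$ and $-mx\in P$. Then $nmx=m(nx)\in P$ and $-nmx=n(-mx)\in P$, so $nmx\in P\cap(-P)=\{0\}$. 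Since characteristic $0$ gives $nm\neq0$ in $L$, we get $x=0$.

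\emph{Step 2: rational closure.} Let $q=a/b\in\Q_{\ge0}$ with $a\in\N_0$ and $b\in\N_{>0}$, and suppose $nx\in P$. If $a=0$, then $qx=0\in\Sat(P)$. Otherwise $nb(qx)=a(nx)\in P$, so $qx\in\Sat(P)$. Directedness is inherited: $L=P-P\subseteq\Sat(P)-\Sat(P)$.

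\emph{Step 3: minimality.} Let $Q$ be a rationally closed positive cone with $P\subseteq Q$. If $nx\in P$, then $x=\frac1n(nx)\in\frac1n Q\subseteq Q$. Hence $\Sat(P)\subseteq Q$.

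\emph{Step 4: idempotence and the characterisation.} Since $\Sat(P)$ is itself rationally closed and contains $\Sat(P)$, minimality (Step 3 applied with $Q=\Sat(P)$) gives $\Sat(\Sat(P))\subseteq\Sat(P)$. The reverse inclusion holds because $\Sat(P)$ contains $P$ for every positive cone $P$, applied here to $\Sat(P)$. For the last claim: if $P=\Sat(P)$, then $P$ is rationally closed by Step 2. Conversely, if $P$ is rationally closed, Step 3 with $Q=P$ gives $\Sat(P)\subseteq P$, and $P\subseteq\Sat(P)$ always holds.

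No step is a real obstacle. The only point needing care is pointedness in Step 1, where invertibility of $nm$ in characteristic $0$ is essential.
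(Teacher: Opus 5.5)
Your proposal is correct and follows the paper's proof essentially step for step: it uses the same clearing-of-denominators identities for sums, products, rational multiples and pointedness, and gets directedness from $P-P\subseteq\Sat(P)-\Sat(P)$. The only difference is that you spell out minimality (via $x=\frac1n(nx)$) and idempotence (minimality applied with $Q=\Sat(P)$), which the paper simply calls immediate from the definitions.
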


\begin{proof}
Clearly $P\subseteq\Sat(P)$ and $0\in\Sat(P)$. If $x,y\in\Sat(P)$, choose $m,n\ge1$ with $mx,ny\in P$. Then
\[
mn(x+y)=n(mx)+m(ny)\in P,\qquad mn\,xy=(mx)(ny)\in P.
\]
For $z\in\Sat(P)$, choose $m\in\N_{>0}$ with $mz\in P$. If $q=r/s\in\Q_{\ge0}$ with $r\in\N_0$ and $s\in\N_{>0}$, then
\[
sm\,(qz)=r\,(mz)\in P,
\]
so that $qz\in\Sat(P)$. To prove pointedness, let $x,-x\in\Sat(P)$ and choose $m,n\ge1$ with $mx\in P$ and $-nx\in P$; then $mnx=n(mx)\in P$ and $mn(-x)=m(-nx)\in P$, so that $x=0$. Hence $\Sat(P)$ is a rationally closed positive cone. If $P$ is directed, then $\Sat(P)-\Sat(P)\supseteq P-P=L$. The minimality of $\Sat(P)$ among the rationally closed positive cones containing $P$, and the identity $\Sat(\Sat(P))=\Sat(P)$, are immediate from the definitions.
\end{proof}

\begin{lemma}\label{lem:latsat}
Let $P$ be an additive submonoid of a $\Q$-vector space $V$ with $P\cap(-P)=\{0\}$. If the order $x\le_P y$ defined by $y-x\in P$ is a lattice order on $V$, then $P$ is rationally closed.
\end{lemma}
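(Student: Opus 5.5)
Rational closure reduces to one claim: if $n\in\N_{>0}$ and $nx\in P$, then $x\in P$. Indeed, once this holds, for $z\in P$ and $q=r/s\in\Q_{\ge0}$ we have $s\,(qz)=rz\in P$, since $P$ is an additive monoid. The claim then gives $qz\in P$. So I will show that a lattice-ordered torsion-free group is unperforated, using the positive and negative parts.

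First I record the standard lattice-group identities, which need only translation invariance of $\le_P$. Translation $y\mapsto y+z$ is an order automorphism, so it preserves joins and meets: $(a\vee b)+z=(a+z)\vee(b+z)$, and similarly for $\wedge$. Negation is an order-reversing bijection, so $-(a\vee b)=(-a)\wedge(-b)$. From these,
\[
x^{+}-x^{-}=(0\vee x)+(0\wedge x)=x,
\]
where the last equality comes from $a\vee b+a\wedge b=a+b$. This identity follows from $a\vee b=a+b-(a\wedge b)$, obtained by translating the meet by $-(a+b)$ and negating. I also use that $x^{+}\wedge x^{-}=0$, which follows from $x^{+}\wedge x^{-}=x^{+}\wedge(x^{+}-x)=x^{+}+(0\wedge(-x))=x^{+}-x^{-}-\ldots$; concretely, translate by $-x^{+}$ to get $(0\wedge -x)+x^{+}$, which equals $-x^{-}+x^{+}-\ldots$. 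I will streamline this in the write-up.

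The main step, and the expected obstacle, is the following: if $a\wedge b=0$, then $(na)\wedge(nb)=0$, or at least $(na)\wedge b=0$. This is the standard fact that disjointness is preserved by adding disjoint elements. For $a,b,c\ge0$ with $a\wedge b=0$ we have $(a+c)\wedge b\le a\wedge b+c\wedge b$. This is proved via distributivity, or directly: $b\wedge(a+c)\le b\wedge(a+ (b\wedge c) + \ldots)$. Distributivity of lattice-ordered groups is classical, but it needs a proof here. I would try the direct inequality $u\wedge(v+w)\le u\wedge v+u\wedge w$ for $u,v,w\ge0$, proved by translation: $u\wedge(v+w)-u\wedge v=(u-u\wedge v)\wedge(v+w-u\wedge v)$, and then bound each piece. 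Iterating gives $(nx^{-})\wedge x^{+}=0$.

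Now suppose $nx\in P$. Then $n x^{-}=n x^{+}-nx\le n x^{+}$, so $nx^{-}\wedge x^{-}\le \ldots$. More cleanly, $x^{-}\le nx^{-}\le nx^{+}$, and also $x^{-}\le x^{-}$. Hence $x^{-}\le (nx^{+})\wedge x^{-}$, and this meet is $0$ by the iterated disjointness. Therefore $x^{-}\le 0$, so $x^{-}=0$ by pointedness, since $x^{-}\ge0$. It follows that $x=x^{+}\in P$, completing the proof.
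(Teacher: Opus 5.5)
Your overall plan is sound, but as written it has a real gap at the step you yourself call the main obstacle. The reduction to ``$nx\in P\Rightarrow x\in P$'' is exactly the paper's. The endgame is also correct: $x^-\le nx^-\le nx^+$, so $x^-\le (nx^+)\wedge x^-=0$, hence $x^-=0$ and $x=x^+\in P$.

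Two intermediate steps are not yet proofs.

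\emph{The computation of $x^+\wedge x^-$.} You misidentify a term: $0\wedge(-x)=-(0\vee x)=-x^+$, not $-x^-$. Hence
\[
x^+\wedge x^-=x^+\wedge(x^+-x)=x^++\big(0\wedge(-x)\big)=x^+-x^+=0
\]
directly.

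\emph{The disjointness lemma.} This is left open, and the route you sketch does not close. Put $m=u\wedge v$. Your identity gives $u\wedge(v+w)-m=(u-m)\wedge\big((v-m)+w\big)$, where $u-m$ and $v-m$ are disjoint. Bounding this by $u\wedge w$ is the same disjointness problem again, and crude bounds on the two pieces just return $u\wedge(v+w)$. The missing idea is to expand the right-hand side instead. Applying translation invariance twice,
\[
(u\wedge v)+(u\wedge w)=(2u)\wedge(u+w)\wedge(u+v)\wedge(v+w),
\]
and for $u,v,w\ge0$ the element $u\wedge(v+w)$ lies below each of the four terms. With this inequality, induction on $n$ gives $0\le x^-\wedge(nx^+)\le n\,(x^-\wedge x^+)=0$. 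This is the form your final step actually uses; you wrote $(nx^-)\wedge x^+=0$, which follows the same way but is not what you need. With these repairs your argument is complete.

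Even repaired, your route is longer than the paper's, which avoids positive and negative parts and disjointness altogether. The paper puts $s=\bigvee_{k=0}^{n-1}kx$ and notes by translation that $s+x=\bigvee_{k=1}^{n}kx$. This join dominates $x,\dots,(n-1)x$, and it dominates $0$ because $nx\ge0$. Hence $s+x\ge s$, that is, $x\in P$. That argument needs only translation invariance and one finite join. Yours is the textbook $\ell$-group argument. Its payoff is the disjointness-preservation inequality itself, which yields for instance $(nx)^{\pm}=nx^{\pm}$, but none of that is needed for this lemma.
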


\begin{proof}
We first prove that $nx\in P$ for $n\in\N_{>0}$ implies $x\in P$. Put $s=\bigvee_{k=0}^{n-1}kx$. Translation invariance gives
\[
s+x=\bigvee_{k=1}^{n}kx.
\]
Every term $kx$ with $1\le k\le n-1$ occurs in the right-hand join, and the remaining term $0$ is dominated by $nx\ge_P0$. Hence $s\le_P s+x$, so $x\in P$. For $p\in P$ and $n\ge1$, apply this implication to $x=p/n$. Closure under addition then gives $(m/n)p\in P$ for every $m\in\N_0$.
\end{proof}

\subsection{Reduced decompositions}\label{sub:reduced}

The classical positive-part criterion and decomposition identities for lattice-ordered abelian groups appear in \cite[Chapter~I, \S~1, Propositions~1.2 and~1.3(vii)]{Johnson1960}. Starting from a pointed cone, without assuming a lattice structure, we formulate the same criterion through a universal property of positive decompositions. Our emphasis is on using this formulation to specify and preserve reduced decompositions as the cone is extended.

The results of this subsection and the additive results of Section~\ref{sec:ext} apply to any $\Q$-vector space in place of $L$; multiplication is needed only where it is explicitly mentioned. Let $C\subseteq L$ be a pointed cone. A \emph{positive representation} of $x\in L$ is a pair $(p,q)\in C\times C$ with $x=p-q$.

\begin{definition}\label{def:reduced}
A positive representation $x=p-q$ is \emph{reduced} if for every positive representation $x=a-b$ there exists $c\in C$ with $a=p+c$ and $b=q+c$.
\end{definition}

A reduced representation has the least first and second coordinates among \emph{all} positive representations of the same element. This condition is stronger than the absence of a nonzero common positive term that can be subtracted from both coordinates.

\begin{example}\label{ex:weak}
The weaker requirement is genuinely insufficient. Let
\[
V=\{(r,s,t,u)\in\Q^{4}:\ r+s=t+u\},\qquad C=V\cap\Q^{4}_{\ge0};
\]
$C$ is a pointed directed cone, since every element of $V$ becomes positive after addition of a
sufficiently large multiple of $(1,1,1,1)$. Put
\[
p=(1,0,1,0),\quad q=(1,0,0,1),\quad a=(0,1,1,0),\quad b=(0,1,0,1);
\]
then $p-q=a-b=(0,0,1,-1)$, and $p$ and $q$ have no nonzero common positive part: if $h\in C$ with
$h\le p,q$, coordinatewise comparison gives $h=(r,0,0,0)$ and $h\in V$ forces $r=0$. Nevertheless
$a-p=(-1,1,0,0)\notin C$, so $(a,b)$ is not of the form $(p+c,q+c)$ with $c\in C$, and $(p,q)$ is
not reduced.
\end{example}

\begin{theorem}\label{thm:reduced}
Let $C\subseteq L$ be a pointed cone, let $x\in L$ and let $x=p-q$ be a positive representation. The following are equivalent:
\begin{enumerate}
\item[\textup{(i)}] $(p,q)$ is reduced;
\item[\textup{(ii)}] $0\vee_{C}x$ exists and equals $p$;
\item[\textup{(iii)}] $0\vee_{C}(-x)$ exists and equals $q$.
\end{enumerate}
If $C$ is a lattice cone, these are further equivalent to
\begin{enumerate}
\item[\textup{(iv)}] $p\wedge_{C}q$ exists and equals $0$.
\end{enumerate}
In particular a reduced representation, when it exists, is unique.
\end{theorem}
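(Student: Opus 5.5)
The proof will go through the observation that upper bounds of $\{0,x\}$ correspond exactly to positive representations of $x$. Indeed, $a\ge_C 0$ and $a\ge_C x$ hold precisely when $a\in C$ and $b:=a-x\in C$, that is, when $x=a-b$ is a positive representation. Under this correspondence the element $a$ is the first coordinate, and the second coordinate $b=a-x$ is determined by $a$.

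For (i)$\Rightarrow$(ii): if $(p,q)$ is reduced, then $p$ is an upper bound of $0$ and $x$. For any other upper bound $a$, the pair $(a,a-x)$ is a positive representation, so reducedness gives $a=p+c$ with $c\in C$, i.e.\ $p\le_C a$. Hence $p=0\vee x$. For (ii)$\Rightarrow$(i): given a positive representation $x=a-b$, the element $a$ is an upper bound, so $c:=a-p\in C$. Then $b=a-x=p+c-x=q+c$, which is exactly the reduced condition. Equivalence with (iii) follows by the symmetric argument applied to $-x=q-p$, since the reduced condition is symmetric under swapping the coordinates and replacing $x$ by $-x$. Uniqueness is immediate because $p$ is a supremum and $q=p-x$.

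For (iv) in the lattice case I would use translation invariance, $(u+z)\wedge(w+z)=(u\wedge w)+z$, valid in any translation-invariant partial order whenever either meet exists. Assume (ii). Then $p\wedge q=(x\vee 0)\wedge (x\vee 0 - x)$. A cleaner route is to take any $h\in C$ with $h\le p$ and $h\le q$. Then $(p-h,q-h)$ is a positive representation of $x$, so reducedness gives $p-h=p+c$ with $c\in C$. Thus $-h\in C$, and pointedness forces $h=0$. So $0$ is a lower bound of $p,q$, and every lower bound $h$ satisfies $h\le 0$: replace $h$ by $h\wedge 0$, which still lies below $p$ and $q$, and $0-(h\wedge0)\in C$ lies below both, hence equals $0$ by the previous step. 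This shows $p\wedge q=0$; this step uses the existence of $h\wedge 0$, which is where the lattice hypothesis enters.

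For the converse (iv)$\Rightarrow$(ii), given the lattice cone we let $p'=0\vee x$ and $q'=p'-x$. By the first part, $(p',q')$ is reduced, so $p=p'+c$ and $q=q'+c$ with $c\in C$. By translation invariance, $0=p\wedge q=(p'\wedge q')+c$, and by (i)$\Rightarrow$(iv) already shown $p'\wedge q'=0$, so $c=0$. The main subtlety is only to ensure that translation invariance of meets is used correctly, which is routine. Example~\ref{ex:weak} shows the lattice hypothesis is genuinely needed for (iv).
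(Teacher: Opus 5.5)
Your arguments for (i)$\Leftrightarrow$(ii)$\Leftrightarrow$(iii) and for uniqueness are correct and coincide with the paper's. The gap is in (i)$\Rightarrow$(iv), where you reduce from an arbitrary lower bound $h$ of $p,q$ to a positive one. The assertion that $0-(h\wedge 0)$ lies below $p$ and $q$ is false. Already for $L=\Q$, $C=\Q_{\ge0}$, $x=p=1$, $q=0$ and $h=-5$ one gets $0-(h\wedge0)=5\not\le q$. Replacing $h$ by $h\wedge 0$ cannot help anyway: $h\wedge0\le0$ holds trivially and says nothing about $h$.

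The repair is already in your first computation, which never used $h\in C$. For \emph{any} $h$ with $h\le p$ and $h\le q$, the pair $(p-h,q-h)$ is a positive representation of $x$. Reducedness then gives $p-h=p+c$ with $c\in C$, i.e.\ $-h\in C$, i.e.\ $h\le0$. (Equivalently, apply your positive case to $h\vee0$, which lies in $C$ and below both $p$ and $q$.) With this, $p\wedge q=0$. Your proof of (iv)$\Rightarrow$(ii), which compares $(p,q)$ with the reduced pair $(0\vee x,(0\vee x)-x)$ using translation invariance of meets, then goes through.

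Your closing remark is also wrong. In Example~\ref{ex:weak}, (iv) fails: $h=(1,-1,0,0)\in V$ is a lower bound of $p$ and $q$ with $-h\notin C$. So the example only separates (iv) from the weaker condition ``no nonzero common positive lower bound''. It says nothing about the lattice hypothesis, which is in fact unnecessary for (iv). The corrected argument gives (i)$\Rightarrow$(iv) for every pointed cone. Conversely, $z\mapsto p+q-z$ is an order-reversing bijection exchanging the lower and upper bounds of $\{p,q\}$. Hence $p\wedge q=0$ forces $p\vee q=p+q$, i.e.\ $0\vee x=p$.

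That bijection is what underlies the identity $(u\vee w)+(u\wedge w)=u+w$. The paper uses this identity to get $p=(p-q)^{+}+p\wedge q$ and reads off (ii)$\Leftrightarrow$(iv) in one stroke. Your repaired route instead argues directly from reducedness in one direction and compares with the canonical decomposition in the other.
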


\begin{proof}
(ii)$\Rightarrow$(i): let $x=a-b$ be a positive representation. Since $a\ge0$ and $a\ge x$, and since $p=0\vee x$, we have $a\ge p$; thus $c=a-p\in C$ and $b=a-x=q+c$. The implication (iii)$\Rightarrow$(i) is the same statement for $-x$.

(i)$\Rightarrow$(ii): let $z\in C$ with $z\ge x$. Then $x=z-(z-x)$ is a positive representation, so reducedness yields $c\in C$ with $z=p+c$ and $z-x=q+c$; in particular $z\ge p$. Hence $p=0\vee x$. Applied to $-x$ this gives (i)$\Rightarrow$(iii).

Assume now that $C$ is a lattice cone. We first record two lattice identities; both follow from translation invariance and from the fact that $x\mapsto-x$ interchanges $\vee$ and $\wedge$. The first is
\begin{equation}\label{eq:lgroup0}
(w\vee 0)+(w\wedge 0)=w\qquad(w\in L):
\end{equation}
indeed $(w\vee 0)-w=0\vee(-w)=-(w\wedge 0)$. The second is
\begin{equation}\label{eq:lgroup1}
(x\vee y)+(x\wedge y)=x+y\qquad(x,y\in L):
\end{equation}
subtracting $2y$ from both sides reduces it to \eqref{eq:lgroup0} with $w=x-y$. Now let $p,q\in C$ and $u=p\wedge q$. By \eqref{eq:lgroup1} with $x=p$, $y=q$, and by translation invariance,
\begin{equation}\label{eq:lgroup}
(p-q)^{+}=0\vee(p-q)=(p\vee q)-q=p-u,
\end{equation}
so that
\begin{equation}\label{eq:lgroup2}
p=(p-q)^{+}+p\wedge q.
\end{equation}
If $(p,q)$ is reduced, then $p=(p-q)^{+}$ by (ii), so \eqref{eq:lgroup2} gives $p\wedge q=0$, which is (iv). Conversely, if (iv) holds then \eqref{eq:lgroup2} gives $p=(p-q)^{+}=0\vee x$, which is (ii).

Finally, if $(p,q)$ and $(p',q')$ are reduced, then $p=p'$ by (ii), and then $q=q'$.
\end{proof}

\begin{corollary}\label{cor:lattice-crit}
Let $C\subseteq L$ be a pointed cone. Then $C$ induces a lattice order on $L$ if and only if every $x\in L$ has a reduced decomposition. In that case $x^{+}=p$, $x^{-}=q$ for the reduced decomposition $x=p-q$, and
\begin{equation}\label{eq:supinf}
a\vee b=b+(a-b)^{+},\qquad a\wedge b=a-(a-b)^{+}.
\end{equation}
\end{corollary}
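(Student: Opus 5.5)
Both directions come from Theorem~\ref{thm:reduced}, together with the formulas~\eqref{eq:supinf}.

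\emph{Lattice order implies reduced decompositions.} Suppose $\le_C$ is a lattice order on $L$, and let $x\in L$. Put $p=0\vee x$. Then $p\ge0$ and $p\ge x$, so $p\in C$ and $q:=p-x\in C$. Hence $x=p-q$ is a positive representation, and by the implication (ii)$\Rightarrow$(i) of Theorem~\ref{thm:reduced} it is reduced. That theorem also gives $q=0\vee(-x)=x^-$, which proves the identifications $x^+=p$ and $x^-=q$.

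\emph{Reduced decompositions imply a lattice order.} Assume every $x\in L$ has a reduced decomposition $x=p-q$. By (i)$\Rightarrow$(ii), $x^+=0\vee x$ exists and equals $p$. I will show that $a\vee b$ exists and equals $b+(a-b)^+$.

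Translation by $b$ is an order automorphism of $(L,\le_C)$, because $\le_C$ is translation invariant. Applying it to $0\vee(a-b)$ gives that $\sup\{b,a\}$ exists and equals $b+(a-b)^+$.

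For the meet, I will use negation. Negation reverses the order, so
\[
a\wedge b=-\bigl((-a)\vee(-b)\bigr)=-\bigl(-a+(a-b)^+\bigr)=a-(a-b)^+ .
\]
Here I apply the join formula with the roles chosen so that the difference is $(-b)-(-a)=a-b$. Thus every pair has a supremum and an infimum.

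Directedness is part of Definition~\ref{def:lattice}. If a lattice cone is required to be directed, this already holds: each $x$ is $p-q$ with $p,q\in C$, so $C-C=L$. Also $C$ is pointed by hypothesis, so $\le_C$ is a genuine partial order.

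There is no real obstacle in this proof. The only point needing care is the direction in the two translation and negation steps, namely that the formula comes out as $b+(a-b)^+$ rather than $a+(b-a)^+$. The two expressions agree anyway, since $b+(a-b)^+=a+(b-a)^+$ by the symmetry of $\vee$.
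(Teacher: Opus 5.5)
Your proof is correct and follows essentially the same route as the paper: both directions reduce to Theorem~\ref{thm:reduced}, the join comes from translating $0\vee(a-b)$ by $b$, and the meet comes from negation. Your explicit check of directedness and of the argument order in the meet computation is extra detail that the paper leaves implicit.
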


\begin{proof}
If $\le_C$ is a lattice order, translation invariance gives $x^+-x=0\vee(-x)=x^-$. Thus $x=x^+-x^-$ is a positive representation, and it is reduced by Theorem~\ref{thm:reduced}. Conversely, if every element has a reduced decomposition, then $x^{+}$ exists for every $x$, and \eqref{eq:supinf} expresses the least upper bound and the greatest lower bound of an arbitrary pair. The identity $a\vee b=b+(a-b)^{+}$ follows from the translation invariance of $\le_C$, and $a\wedge b=a-(a-b)^{+}$ from $a\wedge b=-((-a)\vee(-b))$.
\end{proof}


\section{Extension principles}\label{sec:ext}

\subsection{Order-convex subspaces}\label{sub:convex}

\begin{definition}\label{def:convex}
Let $C\subseteq L$ be a pointed cone and let $U\subseteq L$ be a $\Q$-subspace. Then $U$ is \emph{order convex} with respect to $\le_{C}$ if $0\le_{C}z\le_{C}u$ with $u\in U$ implies $z\in U$. We say that an extension $C\subseteq D$ of pointed cones \emph{preserves reduced decompositions} on $U$ if every pair $(p,q)$ which is the reduced decomposition of some $x\in U$ with respect to $C\cap U$ is also a reduced decomposition of $x$ with respect to $D$. The preservation thus required is preservation in the whole ambient space $L$, not merely in $D\cap U$; preservation in a subspace is a strictly weaker condition and does not suffice for the limit steps below.
\end{definition}

\begin{theorem}[Globalisation]\label{thm:global}
Let $C\subseteq L$ be a pointed directed cone and let $U\subseteq L$ be a $\Q$-subspace such that
\begin{enumerate}
\item[\textup{(a)}] $C\cap U$ induces a lattice order on $U$, and
\item[\textup{(b)}] $(C\setminus U)+U\subseteq C\setminus U$.
\end{enumerate}
Then $U$ is order convex with respect to $\le_{C}$, and for every $x\in U$ the reduced decomposition of $x$ in $U$ is also its reduced decomposition in $L$. In particular, the suprema and infima of pairs of elements of $U$, computed in $U$, agree with those computed in $L$.
\end{theorem}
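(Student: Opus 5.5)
Condition (b) says that any element of $C$ lying outside $U$ stays in $C\setminus U$ after any translate by $U$. The plan is to prove order convexity first and then deduce the reduced-decomposition statement from Definition~\ref{def:reduced}.

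\emph{Order convexity.} Let $0\le_C z\le_C u$ with $u\in U$, so $z\in C$ and $u-z\in C$. Suppose $z\notin U$. Then $z\in C\setminus U$, and (b) gives $z-u\in C\setminus U$, since $-u\in U$. Together with $u-z\in C$ and pointedness, this forces $z-u=0$, so $z=u\in U$. This is a contradiction. Hence $z\in U$, and $U$ is order convex.

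\emph{Reduced decompositions.} Let $x\in U$, and let $x=p-q$ be its reduced decomposition with respect to $C\cap U$, which exists by (a) and Corollary~\ref{cor:lattice-crit}. Take any positive representation $x=a-b$ with $a,b\in C$. The key step is to show $a,b\in U$. Since $a-b=x\in U$, if $a\notin U$ then $a\in C\setminus U$, and (b) gives $b=a-x\in C\setminus U$. So $a$ and $b$ lie in $U$ together or outside $U$ together.
\begin{itemize}
\item If both lie in $U$, then reducedness in $U$ supplies $c\in C\cap U$ with $a=p+c$ and $b=q+c$.
\item If both lie outside $U$, set $c=a-p$. Since $a\in C\setminus U$ and $-p\in U$, (b) gives $c\in C\setminus U\subseteq C$. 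Moreover $b=a-x=p+c-(p-q)=q+c$.
\end{itemize}
In either case $(p,q)$ satisfies Definition~\ref{def:reduced} with respect to $C$ in $L$.

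\emph{Suprema and infima.} By Theorem~\ref{thm:reduced}, $p=0\vee_C x$ in $L$ and $q=0\vee_C(-x)$, and these agree with the values computed in $U$. For $a,b\in U$ we have $a-b\in U$, so the translation formulas \eqref{eq:supinf} give $a\vee b=b+(a-b)^+$ and $a\wedge b=a-(a-b)^+$. These suprema and infima exist in $L$ and coincide with those computed in $U$. Directedness of $C$ is not needed here beyond the setting.

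The only delicate point is the case analysis in the second step, where $a$ and $b$ fall outside $U$. The observation that (b) absorbs the translate by $-p$ makes that case immediate.
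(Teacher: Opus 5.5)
Your proposal is correct and follows essentially the same argument as the paper. It splits on whether $a\in U$, uses (b) to put $a-p$ in $C\setminus U$ in the outside case, and proves order convexity by contradiction via $z-u\in C\setminus U$ and pointedness; the only differences are that you prove order convexity first and state explicitly that $a$ and $b$ lie in $U$ together or outside together.
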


\begin{proof}
Let $x\in U$ and let $x=p-q$ be its reduced decomposition with $p,q\in C\cap U$. Let $x=a-b$ be any positive representation with $a,b\in C$. If $a\in U$, then $b=a-x\in U$, and reducedness in $U$ gives $c\in C\cap U$ with $a=p+c$, $b=q+c$. If $a\notin U$, then $a\in C\setminus U$ and $-p\in U$, so that by (b)
\[
c:=a-p\in(C\setminus U)+U\subseteq C\setminus U,
\]
and $b-q=a-p=c$. In both cases $a=p+c$ and $b=q+c$ with $c\in C$. Hence $(p,q)$ is reduced in $L$.

For order convexity, let $0\le_{C}z\le_{C}u$ with $u\in U$ and suppose that $z\notin U$. Then $z\in C\setminus U$ and $u-z\in C$, so $-u\in U$ gives
\[
z-u\in(C\setminus U)+U\subseteq C\setminus U.
\]
Thus $z-u\in C$, that is $z\ge u$; together with $z\le u$ and the pointedness of $C$ this forces $z=u\in U$, a contradiction. The assertion about suprema and infima follows from the preserved positive part of the difference and the identities \eqref{eq:supinf}.
\end{proof}
\begin{corollary}[Criterion for preservation]\label{cor:preserve}
Let $C\subseteq D$ be pointed directed cones in $L$ and let $U\subseteq V$ be $\Q$-subspaces. Assume that the pairs $(C,U)$ and $(D,V)$ satisfy the hypotheses of Theorem~\ref{thm:global}, that $D\cap U=C\cap U$, and that $U$ is order convex in $V$ with respect to $\le_{D\cap V}$. Then $C\subseteq D$ preserves reduced decompositions on $U$.
\end{corollary}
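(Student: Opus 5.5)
Take $x\in U$ with reduced decomposition $x=p-q$ relative to $C\cap U$, so $p,q\in C\cap U$. We must show that $(p,q)$ is still reduced with respect to $D$ in the whole of $L$. The idea is to pass through $V$: first show that $(p,q)$ stays reduced with respect to $D\cap V$ inside $V$, and then apply Theorem~\ref{thm:global} to the pair $(D,V)$, which lifts the statement from $V$ to $L$.

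For the step inside $V$, note that $C\cap U=D\cap U$, so $p,q\in D\cap V$. By hypothesis (a) for $(D,V)$, the cone $D\cap V$ induces a lattice order on $V$. By Theorem~\ref{thm:reduced}, it is enough to show that $p\wedge q=0$ in $(V,\le_{D\cap V})$. Put $w=p\wedge_{V} q$.
\begin{itemize}
\item Since $0\le p$ and $0\le q$, we have $0\le w\le p$.
\item Because $p\in U$ and $U$ is order convex in $V$ for $\le_{D\cap V}$, it follows that $w\in U$.
\item Then $w\in D\cap U=C\cap U$, and both $p-w$ and $q-w$ lie in $D\cap U=C\cap U$. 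So $w$ is a common lower bound of $p$ and $q$ in $(U,\le_{C\cap U})$.
\item Since $(p,q)$ is reduced in $U$, Theorem~\ref{thm:reduced}(iv), applied in the lattice $U$, gives $p\wedge_U q=0$. Hence $w\le 0$ in $U$, and together with $w\ge 0$ and pointedness this gives $w=0$.
\end{itemize}
So $p\wedge_V q=0$, and $(p,q)$ is the reduced decomposition of $x$ in $V$ with respect to $D\cap V$.

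For the step from $V$ to $L$, the pair $(D,V)$ satisfies the hypotheses of Theorem~\ref{thm:global}. That theorem says the reduced decomposition of $x\in V$ computed in $V$ is also reduced in $L$ with respect to $D$. Therefore $(p,q)$ is reduced with respect to $D$, which is the required preservation.

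The main obstacle is the middle step: the infimum computed in $V$ must be forced back into $U$. Order convexity of $U$ in $V$ is exactly what achieves this, and the equality $D\cap U=C\cap U$ is what lets us compare it with the infimum in $U$. The hypotheses on $(C,U)$ are used only to guarantee that $C\cap U$ is a lattice cone on $U$, so that the reduced decomposition exists and criterion (iv) of Theorem~\ref{thm:reduced} is available. As an alternative to criterion (iv), one could argue directly from Definition~\ref{def:reduced}: for $a,b\in D\cap V$ with $x=a-b$, convexity places $p\wedge_V a$ in $U$, but the lattice-infimum route above is cleaner.
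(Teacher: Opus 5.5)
Your proof is correct and follows essentially the paper's argument. You compute the relevant lattice element in $V$, pull it back into $U$ by order convexity, compare it inside $U$ using $D\cap U=C\cap U$, and then lift from $V$ to $L$ by Theorem~\ref{thm:global} applied to $(D,V)$. The only difference is cosmetic: the paper works with the positive part $r=0\vee_{D\cap V}x$ (criterion (ii)) and bounds $0\le r\le p$, whereas you work with the meet $p\wedge_V q$ (criterion (iv)).
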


\begin{proof}
Let $x\in U$ and let $p=0\vee_{C\cap U}x$; by Theorem~\ref{thm:global} applied to $(C,U)$, $p$ is also $0\vee_{C}x$. Let $r=0\vee_{D\cap V}x$, which exists by hypothesis and which is a common upper bound of $0$ and $x$ in $V$. Since $p$ is such a bound as well, $0\le_{D}r\le_{D}p$. As $p\in D$ and $U$ is order convex in $V$ with respect to $\le_{D\cap V}$, we get $r\in U$; and $r\in D\cap U=C\cap U$ because $0\le_{D}r$ and $r\le_{D}p\in D$. Hence $r$ is a common upper bound of $0$ and $x$ in $U$ with respect to $\le_{C\cap U}$, so $p\le_{C}r$, that is $r-p\in C\cap U\subseteq D$. Together with $p-r\in D$ and the pointedness of $D$ this gives $r=p$.

Finally, Theorem~\ref{thm:global} applied to the pair $(D,V)$ shows that $V$ is order convex in $L$ with respect to $\le_{D}$ and that reduced decompositions with respect to $D\cap V$ are reduced with respect to $D$; this completes the comparison, for every $x\in U$, between the decomposition of $x$ in $U$ and its decomposition in $V$.
\end{proof}

\begin{theorem}[Increasing unions]\label{thm:union}
Let $\lambda\ge1$ be an ordinal, and let $(C_\beta)_{\beta<\lambda}$ and $(U_\beta)_{\beta<\lambda}$ be increasing families of pointed directed cones in $L$ and of $\Q$-subspaces of $L$, respectively. Put $A_\beta=C_\beta\cap U_\beta$. Assume that:
\begin{enumerate}
\item[\textup{(a)}] $A_\beta$ is a lattice cone on $U_\beta$, and its reduced decompositions are reduced with respect to $C_\beta$;
\item[\textup{(b)}] for $\beta<\gamma<\lambda$, every reduced decomposition with respect to $A_\beta$ is reduced with respect to $C_\gamma$, and $U_\beta$ is order convex in $U_\gamma$ with respect to $A_\gamma$.
\end{enumerate}
Set $C=\bigcup_{\beta<\lambda}C_\beta$ and $U=\bigcup_{\beta<\lambda}U_\beta$. Then $C$ is a pointed directed cone, $C\cap U=\bigcup_{\beta<\lambda}A_\beta$ is a lattice cone on $U$, and every reduced decomposition with respect to $A_\beta$ is reduced with respect to $C$. Moreover
\[
C\cap U_\beta=A_\beta,
\]
and $U_\beta$ is order convex in $U$ with respect to $C\cap U$. If $L$ is a field and every $C_\beta$ is closed under multiplication, then so is $C$.
\end{theorem}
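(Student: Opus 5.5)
The argument is bookkeeping on the increasing families, using directedness of the index set $\lambda$: any two indices have a common larger index. I will check the claims in order.

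\emph{Basic properties of $C$.} $C$ is a pointed directed cone, and it is closed under multiplication when every $C_\beta$ is. For $x,y\in C$ pick $\beta$ with $x,y\in C_\beta$; then $x+y$, $qx$ and (in the field case) $xy$ all lie in $C_\beta\subseteq C$. If $x,-x\in C$, both lie in a single $C_\beta$, so $x=0$ by pointedness of $C_\beta$. Directedness is inherited from $C_0$, since $C\supseteq C_0$ and $C_0-C_0=L$.

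\emph{The identity $C\cap U_\beta=A_\beta$.} Let $x\in C\cap U_\beta$, say $x\in C_\gamma$.
\begin{itemize}
\item If $\gamma\le\beta$, then $x\in C_\beta\cap U_\beta=A_\beta$.
\item If $\gamma>\beta$, then $x\in C_\gamma\cap U_\gamma=A_\gamma$ and $x\in U_\beta$. The reduced decomposition of $x$ in $A_\beta$ is $x=p-q$, and by (b) it is reduced with respect to $C_\gamma$. Since $x=x-0$ is a positive representation in $C_\gamma$, there is $c\in C_\gamma$ with $x=p+c$ and $0=q+c$. Pointedness of $C_\gamma$ gives $q=c=0$, so $x=p\in A_\beta$.
\end{itemize}
From this, $C\cap U=\bigcup_\beta (C\cap U_\beta)=\bigcup_\beta A_\beta$.

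\emph{Reduced decompositions are reduced in $C$.} Fix a reduced decomposition $x=p-q$ with respect to $A_\beta$, and let $x=a-b$ with $a,b\in C$. Choose $\gamma\ge\beta$ with $a,b\in C_\gamma$. Reducedness with respect to $C_\gamma$ (by (a) if $\gamma=\beta$, by (b) otherwise) gives $c\in C_\gamma\subseteq C$ with $a=p+c$ and $b=q+c$.

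\emph{Lattice cone on $U$.} By Corollary~\ref{cor:lattice-crit}, applied to the pointed cone $C\cap U$ on the vector space $U$, it suffices to show every $x\in U$ has a reduced decomposition. Take $x\in U_\beta$ and its $A_\beta$-reduced decomposition $(p,q)$. It is reduced with respect to $C$ by the previous step, hence also with respect to the smaller cone $C\cap U$, because $p,q\in A_\beta\subseteq C\cap U$ and the witnessing $c=a-p$ lies in $U$ whenever $a\in U$. Directedness of $C\cap U$ on $U$ then follows, since $x=p-q$.

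\emph{Order convexity of $U_\beta$ in $U$.} Let $0\le z\le u$ with $u\in U_\beta$, $z\in U$, and $z,u-z\in C\cap U$. Choose $\gamma\ge\beta$ with $z\in U_\gamma$. By the identity above with $\gamma$ in place of $\beta$, $z,u-z\in A_\gamma$. The order convexity in (b) (trivial if $\gamma=\beta$) then gives $z\in U_\beta$.

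The one step needing care is the identity $C\cap U_\beta=A_\beta$. There, the global reducedness in (b) is what stops later cones from adding new positive elements to $U_\beta$; reducedness within a subspace alone would not suffice. Everything else is routine.
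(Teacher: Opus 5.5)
Your proof is correct and follows essentially the paper's argument; the differences are cosmetic. You derive $C\cap U=\bigcup_{\beta<\lambda}A_\beta$ from the restriction identity and use that identity at stage $\gamma$ in the convexity step, whereas the paper passes to a common stage in both places. One small correction to your closing comment: since $C_\gamma\cap U_\beta\subseteq C_\gamma\cap U_\gamma=A_\gamma$ for $\gamma\ge\beta$, reducedness with respect to $A_\gamma$ alone already gives $C\cap U_\beta=A_\beta$, so the global form of (b) is really needed only for the conclusion that $A_\beta$-reduced decompositions stay reduced with respect to all of $C$.
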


\begin{proof}
Closure under addition and nonnegative rational scaling follows by taking a common stage for finitely many elements. The same argument proves multiplicative closure when assumed. If $x,-x\in C$, both lie in a common $C_\gamma$, so $x=0$. Directedness follows from $C_0-C_0=L$.

Let $x\in U_\beta$ and let $x=p-q$ be its reduced decomposition with respect to $A_\beta$. For any representation $x=a-b$ with $a,b\in C$, choose $\gamma\ge\beta$ such that $a,b\in C_\gamma$. Assumption (a) applies if $\gamma=\beta$, and (b) applies if $\gamma>\beta$. In either case there is $c\in C_\gamma$ with $a=p+c$ and $b=q+c$. Thus $(p,q)$ is reduced with respect to $C$. If $a,b\in C\cap U$, then $c=a-p$ also lies in $U$, proving reducedness with respect to $C\cap U$. Every element of $U$ is treated in this way, so Corollary~\ref{cor:lattice-crit} proves the lattice assertion. The equality $C\cap U=\bigcup_\beta A_\beta$ follows by choosing a common stage for membership in $C$ and in $U$.

For the restriction assertion, take $x\in C\cap U_\beta$ and its old reduced decomposition $x=p-q$. Comparing it with $x=x-0$ in $C$ gives $0=q+c$ for some $c\in C$. Since $q\in C$ and $-q=c\in C$, pointedness gives $q=0$, and hence $x=p\in A_\beta$.

Finally, if $0\le_{C\cap U}z\le_{C\cap U}u$ with $u\in U_\beta$, choose $\gamma\ge\beta$ such that $z,u\in U_\gamma$ and $z,u-z\in C_\gamma$. Then $0\le_{A_\gamma}z\le_{A_\gamma}u$, so $z\in U_\beta$ by (b), or immediately if $\gamma=\beta$. This proves order convexity.
\end{proof}

\subsection{Coordinate and lexicographic extensions}\label{sub:linext}

\begin{lemma}\label{lem:linext}
Let $A$ be a lattice cone on a $\Q$-subspace $U\subseteq L$.
\begin{enumerate}
\item[\textup{(a)}] \emph{(Coordinate extension.)} Let $V=U\oplus\bigoplus_{j\in J}\Q e_j$ and $B=A+\bigoplus_{j\in J}\Q_{\ge0}e_j$. Then $B$ is a lattice cone on $V$, $B\cap U=A$, $U$ is order convex in $V$, and reduced decompositions with respect to $A$ remain reduced with respect to $B$. Moreover
\[
\Big(u+\sum_jr_je_j\Big)^+=u^++\sum_j\max\{r_j,0\}e_j.
\]
\item[\textup{(b)}] \emph{(Lexicographic extension.)} Let $V=\Q e\oplus U$ and
\[
B=A\cup\{re+u:r\in\Q_{>0},\ u\in U\}.
\]
Then $B$ is a lattice cone on $V$, $B\cap U=A$, $U$ is order convex in $V$, and reduced decompositions with respect to $A$ remain reduced with respect to $B$. Its positive parts are
\[
(re+u)^+=
\begin{cases}
re+u,&r>0,\\
u^+,&r=0,\\
0,&r<0.
\end{cases}
\]
\end{enumerate}
\end{lemma}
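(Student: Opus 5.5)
In both parts I will use the same pattern. First I check that $B$ is a pointed cone on $V$. Then I write down an explicit candidate positive representation for each $x\in V$ and verify directly that it is reduced with respect to $B$, in the sense of Definition~\ref{def:reduced}. By Corollary~\ref{cor:lattice-crit} this shows that $B$ is a lattice cone and identifies $x^+$ with the displayed formula. The remaining assertions ($B\cap U=A$, order convexity, preservation) are then short consequences. Directedness of $B$ follows because every element has a positive representation.

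For (a), I use the direct-sum decomposition $x=u+\sum_j r_je_j$, which is unique. Under it, $B$ is exactly the set of elements with $u\in A$ and all $r_j\ge0$. So $B$ is the product cone $A\times\Q_{\ge0}^{(J)}$, and pointedness and the cone axioms hold coordinatewise. The candidate is
\[
p=u^++\sum_j\max\{r_j,0\}e_j,\qquad q=u^-+\sum_j\max\{-r_j,0\}e_j.
\]
Given any representation $x=a-b$ with $a,b\in B$, I compare components. The $U$-components give a positive representation $u=a_U-b_U$ in $A$, so reducedness in $A$ yields $c_U\in A$ with $a_U=u^++c_U$. Each $e_j$-component is a positive representation of $r_j$ in $\Q_{\ge0}$, which is reduced by $\max$. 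Assembling these gives $c\in B$ with $a=p+c$ and $b=q+c$. With the same componentwise reading:
\begin{itemize}
\item $B\cap U=A$ is immediate.
\item Order convexity holds: $0\le z\le u\in U$ forces every $e_j$-coordinate of $z$ to be both $\ge0$ and $\le0$, hence $0$.
\item Preservation holds: for $x\in U$ the candidate is exactly the old pair $(u^+,u^-)$.
\end{itemize}

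For (b), pointedness comes from a sign split. If $x$ and $-x$ both lie in $B$ with $e$-coefficients $r$ and $-r$, then $r=0$, so both lie in $A$ and $x=0$. Closure under addition follows because the $e$-coefficients add and $A$ is closed. The candidates are $(x,0)$ if $r>0$, $(u^+,u^-)$ if $r=0$, and $(0,-x)$ if $r<0$. To check reducedness, take $x=a-b$ with $e$-coefficients $r_a-r_b=r$ and split into cases.
\begin{itemize}
\item If $r>0$: then $r_a>0$, and $c=b$ works.
\item If $r<0$: symmetric, with $c=a$.
\item If $r=0$ and $r_a=r_b>0$: then $c=a-u^+$ has positive $e$-coefficient, so $c\in B$ and $b-u^-=c$.
\item If $r=0$ and $r_a=r_b=0$: we are in $U$ and reducedness in $A$ applies.
\end{itemize}
Then $B\cap U=A$ holds by definition. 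For order convexity, $0\le z\le u$ forces the $e$-coefficient $r_z$ of $z$ to satisfy $r_z\ge0$ and $-r_z\ge0$, so $z\in U$. Preservation is the $r=0$ case just treated.

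The only delicate point I anticipate is the mixed case in (b), where $a$ and $b$ lie outside $U$ but $x\in U$. This is exactly the globalisation phenomenon of Theorem~\ref{thm:global}, hypothesis (b): the set $B\setminus U$ absorbs translation by $U$. In fact, instead of doing the case analysis by hand, I could verify that hypothesis directly for the pair $(B,U)$ inside $V$ and then invoke Theorem~\ref{thm:global}, with $V$ as ambient space, to obtain preservation and order convexity at once.
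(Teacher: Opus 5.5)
Your proposal is correct and follows essentially the paper's proof: in (a) you compare an arbitrary positive representation coordinatewise with the candidate $(p,q)$, and in (b) you split on the sign of the $e$-coordinate, concluding in both parts with Corollary~\ref{cor:lattice-crit}. The only difference is cosmetic. In the case $r=0$ of (b), the paper checks that $u^+$ is the least common upper bound of $0$ and $u$ and then invokes Theorem~\ref{thm:reduced}, whereas you verify Definition~\ref{def:reduced} directly. Your suggested shortcut through Theorem~\ref{thm:global} would also work for (b), since its hypothesis (b) holds for $(B,U)$ inside $V$; the cases $r\neq0$ would still have to be handled separately.
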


\begin{proof}
(a) Addition, nonnegative rational scaling and directedness follow from the direct sum. Pointedness and $B\cap U=A$ follow by comparing coordinates. For $v=u+\sum_jr_je_j$, put
\[
p=u^++\sum_j\max\{r_j,0\}e_j,\qquad
q=u^-+\sum_j\max\{-r_j,0\}e_j.
\]
Then $v=p-q$. If also $v=a-b$ with $a=a_0+\sum_js_je_j$ and $b=b_0+\sum_jt_je_j$ in $B$, then $u=a_0-b_0$ and $r_j=s_j-t_j$. Reducedness in $A$ gives $a_0-u^+=b_0-u^-\in A$, while
\[
s_j-\max\{r_j,0\}=t_j-\max\{-r_j,0\}\ge0.
\]
Consequently $a-p=b-q\in B$, so $(p,q)$ is reduced. Corollary~\ref{cor:lattice-crit} now gives the lattice property and the formula for $v^+$. For $v\in U$ this is the old decomposition, proving preservation. If $0\le_B z\le_B u$ with $u\in U$, the new coordinates of both $z$ and $u-z$ are nonnegative and are negatives of each other. They are all zero, so $z\in U$.

(b) The sign of the $e$-coordinate shows that $B$ is pointed and that $B\cap U=A$; the cone axioms are immediate. An element $re+u$ is a difference of two elements with positive $e$-coordinate, by choosing a rational number larger than $0$ and $r$, so $B$ is directed. If $r>0$, the element $v=re+u$ is positive and hence $v^+=v$. If $r<0$, then $-v$ is positive and hence $v^+=0$. If $r=0$, $u^+$ is a common upper bound of $0,u$. Any other common upper bound $z\in B$ either lies in $A$, where $z\ge_Au^+$, or has positive $e$-coordinate, in which case $z-u^+\in B$. Thus $v^+=u^+$. These formulae, applied also to $-v$, give $v=v^+-v^-$, and Theorem~\ref{thm:reduced} and Corollary~\ref{cor:lattice-crit} prove the lattice property and preservation on $U$. Finally, if $0\le_B z\le_B u$ with $u\in U$, the $e$-coordinates of $z$ and $u-z$ are nonnegative and opposite. They vanish, so $z\in U$.
\end{proof}

\section{A \texorpdfstring{$\Q$}{Q}-valued valuation and heights}\label{sec:val}

\subsection{The hypotheses}\label{sub:hyp}

By a \emph{valuation} on $L$ with values in $\Q$ we mean a map $v\colon L^{\times}\to\Q$ with
\[
v(xy)=v(x)+v(y),\qquad v(x+y)\ge\min\{v(x),v(y)\}\quad(x+y\ne0),
\]
extended by $v(0)=+\infty$. We shall work under the following assumptions.

\begin{center}
\fbox{\parbox{0.94\textwidth}{
\textbf{Hypotheses.}
\begin{enumerate}
\item[\textup{(H1)}] $v\colon L^{\times}\to\Q$ is a valuation with $v(\Q^{\times})=\{0\}$ and $v(L^{\times})=\Q$;
\item[\textup{(H2)}] $t\in L^{\times}$ satisfies $v(t)=-1$;
\item[\textup{(H3)}] there is a family $(t^{r})_{r\in\Q}$ in $L^{\times}$ with $t^{0}=1$, $t^{1}=t$, $t^{r}t^{s}=t^{r+s}$ and $v(t^{r})=-r$ for all $r,s\in\Q$.
\end{enumerate}}}
\end{center}

Hypothesis (H1) makes the height filtration $\Q$-indexed, while (H3) supplies multiplicatively compatible representatives of its heights. We shall also use the following consequence of $v(\Q^\times)=\{0\}$: every nonzero element of $L$ algebraic over $\Q$ has valuation zero. Indeed, in a polynomial relation with nonzero constant and leading coefficients, a positive valuation would make the constant term uniquely least valued, and a negative valuation would make the leading term uniquely least valued. Both contradict the valuation inequality applied to a sum equal to zero. In particular $t$ is transcendental over $\Q$, and $v(i)=0$ whenever $i^2=-1$. The next lemma produces (H1)--(H3) inside $\C$.

\begin{lemma}\label{lem:exist}
Let $T$ be a transcendence basis of $\C$ over $\Q$, let $t\in T$ and put $F_0=\Q(T\setminus\{t\})$. Then $\C$ satisfies \textup{(H1)--(H3)}, with $v$ trivial on $F_0^{\times}$.
\end{lemma}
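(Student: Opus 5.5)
We first build the valuation on the purely transcendental subfield and then extend it. On $K_0=F_0(t)$, a rational function field in one variable over $F_0$, we take the degree valuation at infinity,
\[
v_0(f/g)=\deg_t g-\deg_t f .
\]
It is trivial on $F_0^{\times}$ (hence on $\Q^{\times}$), satisfies $v_0(t)=-1$, and has value group $\Z$.

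Since $\C$ is algebraic over $K_0$ (indeed an algebraic closure of it, as $T$ is a transcendence basis and $\C$ is algebraically closed), Chevalley's extension theorem gives a valuation $w$ on $\C$ extending $v_0$. Its value group is then contained in the divisible hull of $\Z$, because every $x\in\C^{\times}$ is algebraic over $K_0$ and so $n\,w(x)\in\Z$ for some $n\ge1$. Identifying this hull with $\Q$ yields $v\colon\C^\times\to\Q$. Surjectivity onto $\Q$ uses algebraic closedness: for $r=m/n$, any root $x$ of $X^n-t^{-m}$ has $n\,v(x)=m$, so $v(x)=m/n$. This establishes (H1) and (H2), with $v$ trivial on $F_0^\times\supseteq\Q^\times$.

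For (H3) we construct a compatible system of roots of $t$. Choose inductively $\tau_n\in\C$ with $\tau_1=t$ and $\tau_{n+1}^{\,n+1}=\tau_n$; this is possible because $\C$ is algebraically closed. Then $\tau_n^{\,n!}=t$ for every $n$. Define
\[
t^{m/n!}=\tau_n^{\,m}\qquad(m\in\Z).
\]
This is well defined: if $m/n!=m'/n'!$ with $n\le n'$, then $m'=m\cdot n'!/n!$ and $\tau_{n'}^{\,n'!/n!}=\tau_n$. Every rational number has a denominator dividing some $n!$, so $t^r$ is defined for all $r\in\Q$. Multiplicativity $t^rt^s=t^{r+s}$ follows by passing to a common level $n$, and $t^0=1$, $t^1=t$ hold by construction. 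Finally $v(t^r)=-r$, since $n!\,v(\tau_n)=v(t)=-1$ and $v$ takes values in the torsion-free group $\Q$.

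The main obstacle is the use of Zorn's lemma, or Chevalley's theorem, to extend the valuation to the infinite algebraic extension $\C/K_0$. We simply cite the standard extension theorem for valuations (extension of valuation rings via Chevalley's place extension theorem), together with the fact that for an algebraic extension the value group lies in the divisible hull of the original one. Everything else is elementary.
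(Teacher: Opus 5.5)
Your proposal is correct and follows the paper's proof essentially step for step. The steps are the same: the degree valuation at infinity on $F_0(t)$, its extension to $\C$ by the standard extension theorem, identification of the value group with $\Q$, and the compatible system of iterated roots $\tau_n$ with $t^{m/n!}=\tau_n^{m}$ (the paper's $z_n$). The differences are cosmetic. The paper proves $n\,w(x)\in\Z$ directly from a minimal polynomial relation and gets surjectivity from divisibility of the value group. You instead cite the torsion property of value groups in algebraic extensions and exhibit explicit roots of $X^n-t^{-m}$.
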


\begin{proof}
Every nonzero element of $F_0(t)=\Q(T)$ is a quotient $f(t)/g(t)$ of nonzero polynomials $f,g\in F_0[X]$. Put
\[
v_0(f/g)=\deg g-\deg f.
\]
This is a valuation on $F_0(t)$: it is additive on products, and $v_0(f+g)\ge\min\{v_0(f),v_0(g)\}$ for polynomials, with equality when the degrees differ; the same then holds for quotients. Clearly $v_0(F_0^{\times})=\{0\}$, $v_0(t)=-1$ and $v_0(F_0(t)^{\times})=\Z$.

Since $\C$ is algebraic over $F_0(t)$, the valuation $v_0$ extends to a valuation $w$ on $\C$, by the extension theorem for valuations on algebraic field extensions; see \cite[Section 3.1]{EP}. Let $\Gamma=w(\C^{\times})$. For $x\in\C^{\times}$ choose a nontrivial relation $\sum_{j=0}^{n}a_jx^{j}=0$ with $a_j\in F_0(t)$ and $n\ge1$ minimal. Among the numbers $w(a_j)+jw(x)$ with $a_j\ne0$, the minimum is attained at least twice, say for $j\ne k$; then
\[
(j-k)w(x)=w(a_k)-w(a_j)\in\Z.
\]
For each $\gamma\in\Gamma$, this gives $n\gamma=m$ for some $n\in\N_{>0}$ and $m\in\Z$, where $\Z$ is identified with the value group of $F_0(t)$. Define $\phi(\gamma)=m/n$. If also $n'\gamma=m'$, then $n'm=nm'$, so $\phi$ is well defined. If $n_1\gamma_1=m_1$ and $n_2\gamma_2=m_2$, then
\[
n_1n_2(\gamma_1+\gamma_2)=n_2m_1+n_1m_2,
\]
and hence $\phi(\gamma_1+\gamma_2)=\phi(\gamma_1)+\phi(\gamma_2)$. Moreover, since $n>0$ and $\Gamma$ is an ordered abelian group,
\[
\gamma>0\quad\Longleftrightarrow\quad n\gamma=m>0
\quad\Longleftrightarrow\quad m/n>0.
\]
Thus $\phi$ is an order-preserving injection and fixes $\Z$. We hence identify $\Gamma$ with a subgroup of $\Q$. Since $\C$ is algebraically closed, $\Gamma$ is divisible: an $n$th root of an element of value $\gamma$ has value $\gamma/n$. A divisible subgroup of $\Q$ containing $\Z$ must be all of $\Q$, so $\Gamma=\Q$ under this identification.

It remains to choose the compatible representatives required in (H3). Choose $z_0=t$ and, recursively for $n\ge1$, $z_n\in\C^\times$ with $z_n^n=z_{n-1}$; this is possible since $\C$ is algebraically closed. For $r=m/n!$ with $m\in\Z$ put $t^{r}=z_{n}^{m}$. If $r=m/n!=m'/n'!$ and $N=\max\{n,n'\}$, then $z_{N}^{N!/n!}=z_{n}$ by induction, so that
\[
z_{n}^{m}=\big(z_{N}^{N!/n!}\big)^{m}=z_{N}^{\,mN!/n!}=z_{N}^{\,m'N!/n'!}=z_{n'}^{m'},
\]
and $t^{r}$ is well defined; the identities $t^{r}t^{s}=t^{r+s}$, $t^{0}=1$, $t^{1}=t$ follow from the same computation. Finally $n!\,w(z_{n})=w(t)=-1$, whence $w(t^{r})=m\,w(z_{n})=-m/n!=-r$. Thus $\Q=\Gamma$, and $v:=w$ satisfies \textup{(H1)--(H3)}.
\end{proof}

\subsection{Heights and layers}\label{sub:height}

Assume (H1)--(H3). The \emph{height} of $x\in L$ is
\begin{equation}\label{eq:height}
d(x)=-v(x)\quad(x\ne0),\qquad d(0)=-\infty,
\end{equation}
a value in $\Q\cup\{-\infty\}$ by (H1). For a real number $a\in\R$ we put
\begin{equation}\label{eq:levels}
L_{<a}=\{x\in L:\ d(x)<a\},\qquad L_{\le a}=\{x\in L:\ d(x)\le a\},
\end{equation}
and for $s\in\Q$,
\begin{equation}\label{eq:layers}
H_{s}=\{ct^{s}+u:\ c\in\Q_{>0},\ u\in L_{<s}\}.
\end{equation}
The selected layers $H_s$ consist of elements whose class in $L_{\le s}/L_{<s}$ is a positive rational multiple of the class of $t^s$. They need not exhaust the elements of height $s$, and no order on the whole residue field is assumed. The subspaces $L_{<a}$ and $L_{\le a}$ are defined for real cut values $a$, which are needed at the limit steps of Section~\ref{sec:constr}. These limit cuts need not be rational, although every nonzero element still has rational height.

\begin{lemma}\label{lem:layers}
For all $a\in\R$, $s,r\in\Q$ and $\lambda\in\Q_{>0}$ the following hold.
\begin{enumerate}
\item[\textup{(a)}] $L_{<a}$ is a $\Q$-subspace of $L$, and $L=\bigcup_{a\in\Q}L_{<a}$.
\item[\textup{(b)}] $d(x+y)\le\max\{d(x),d(y)\}$, with equality if $d(x)\ne d(y)$; and $d(xy)=d(x)+d(y)$, with the convention $-\infty+r=-\infty$ for $r\in\Q\cup\{-\infty\}$.
\item[\textup{(c)}] Every $x\in H_s$ satisfies $d(x)=s$, and its coefficient $c$ in \eqref{eq:layers} is unique. Hence the layers $H_s$ are pairwise disjoint and $H_s\cap(-H_r)=\emptyset$ for all $s,r$.
\item[\textup{(d)}] $\lambda H_s=H_s$; $H_s+H_r\subseteq H_{\max\{s,r\}}$; $H_sH_r=H_{s+r}$; and $H_s+L_{<s}=H_s$.
\item[\textup{(e)}] $H_s\subseteq L_{\le s}$, and $L_{<s}\subseteq L_{<r}$ for $s<r$.
\end{enumerate}
\end{lemma}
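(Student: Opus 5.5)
All five items reduce to the valuation axioms (H1)--(H3), rewritten through $d=-v$. I take them in order, since (c) and (d) use (b).

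For (b), the ultrametric inequality $v(x+y)\ge\min\{v(x),v(y)\}$ becomes $d(x+y)\le\max\{d(x),d(y)\}$. Equality when $d(x)\ne d(y)$ is the standard strict-triangle argument. Suppose $d(x)<d(y)$ and $d(x+y)<d(y)$. Then $y=(x+y)-x$ would have height $<d(y)$; here I use $d(-x)=d(x)$, which holds because $v(-1)=0$ by (H1). The product rule is additivity of $v$, and the cases with a zero factor are covered by the stated convention. For (a), $L_{<a}$ contains $0$ and is closed under addition by (b). It is closed under scaling by $q\in\Q^\times$ since $d(qx)=d(q)+d(x)=d(x)$ by (H1). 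Every element has height in $\Q\cup\{-\infty\}$, so $x\in L_{<d(x)+1}$ with $d(x)+1\in\Q$, and $L$ is the union of the $L_{<a}$ over rational $a$. Item (e) is immediate from the definitions together with $d(ct^s)=s$ for $c\in\Q_{>0}$, using (H1) and (H3).

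For (c), write $x=ct^s+u$ with $d(ct^s)=s>d(u)$; then (b) gives $d(x)=s$. For uniqueness, suppose $ct^s+u=c't^s+u'$ with $c\ne c'$. Then $(c-c')t^s=u'-u$, and the left side has height $s$ while the right side has height $<s$, a contradiction. Disjointness of $H_s$ and $H_r$ for $s\ne r$ follows from the height computation. For $s=r$, an element of $H_s\cap(-H_s)$ would satisfy $ct^s+u=-c't^s-u'$, so $(c+c')t^s\in L_{<s}$ with $c+c'>0$, which is impossible. When $s\ne r$, $H_s\cap(-H_r)$ is empty because $d(-y)=d(y)$.

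For (d), $\lambda(ct^s+u)=(\lambda c)t^s+\lambda u$ gives $\lambda H_s\subseteq H_s$, and applying this to $\lambda^{-1}$ gives equality. For sums with $s>r$, an element of $H_r$ has height $r<s$ by (c) and is absorbed into the $L_{<s}$ part. For $s=r$ the coefficients add to a positive rational. The identity $H_s+L_{<s}=H_s$ is immediate, using $0\in L_{<s}$ for the reverse inclusion. For products,
\[
(ct^s+u)(c't^r+u')=cc't^{s+r}+\bigl(ct^su'+c'ut^r+uu'\bigr),
\]
and each bracketed term has height $<s+r$ by (b) and (H3), so $H_sH_r\subseteq H_{s+r}$. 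For the reverse inclusion, take $ct^{s+r}+w$ with $d(w)<s+r$ and write it as $(ct^s+wt^{-r})\,t^r$. Here $t^r\in H_r$ (take $c=1$, $u=0$), and $d(wt^{-r})=d(w)-r<s$, so the first factor lies in $H_s$.

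The only mildly delicate points are this reverse inclusion $H_{s+r}\subseteq H_sH_r$, which needs the inverse $t^{-r}$ supplied by (H3), and the case $s=r$ of $H_s\cap(-H_r)=\emptyset$, which relies on uniqueness and positivity of the coefficient from (c). Everything else is routine bookkeeping with (b).
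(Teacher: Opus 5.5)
Your proposal is correct and follows essentially the same route as the paper. You read (a), (b) and (e) off the valuation axioms, prove uniqueness in (c) by comparing the heights of $(c-c')t^s$ and $u'-u$, and obtain $H_{s+r}\subseteq H_sH_r$ by factoring out a compatible power of $t$; you factor $t^r$ where the paper factors $t^s$, which is the same argument.
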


\begin{proof}
Parts (a), (b) and (e) restate the axioms of a valuation, with $d(x)$ finite for $x\ne0$ by (H1), which gives the last assertion of (a). For (c), let $ct^{s}+u=c't^{s}+u'$ with $c,c'\in\Q_{>0}$, $u,u'\in L_{<s}$ and $c\ne c'$. Then $(c-c')t^{s}=u'-u$, and by (b) and (H3) the left-hand side has height $s$, whereas the right-hand side has height $<s$; this is impossible. Hence $c=c'$ and $u=u'$. In particular $H_s\cap(-H_s)=\emptyset$, since an element of $H_s$ has leading coefficient $c>0$ while its negative has leading coefficient $-c<0$. For (d), the first assertion is clear. For the inclusion $H_sH_r\subseteq H_{s+r}$,
\[
(ct^{s}+u)(c't^{r}+u')=cc't^{s+r}+\big(cu't^{s}+c't^{r}u+uu'\big),
\]
where the bracket has height $<s+r$ by (b). For the reverse inclusion, let $ct^{s+r}+u\in H_{s+r}$ with $d(u)<s+r$; then
\[
ct^{s+r}+u=t^{s}\big(ct^{r}+t^{-s}u\big),
\]
and $d(t^{-s}u)=-s+d(u)<r$, so that $t^{s}\in H_s$ and $ct^{r}+t^{-s}u\in H_r$; hence $H_{s+r}\subseteq H_sH_r$. The remaining assertions are immediate from (b).
\end{proof}

\subsection{Well-ordered monoids of heights}\label{sub:monoid}

A subset $D\subseteq\Q$ generates the additive monoid
\begin{equation}\label{eq:S}
S=S(D)=\Big\{\sum_{d\in D}n_dd:\ n_d\in\N_0,\ \text{almost all }n_d=0\Big\}.
\end{equation}
Recall that a totally ordered set is \emph{well ordered} if every nonempty subset has a least element.

\begin{lemma}\label{lem:monoid}
Let $D\subseteq\Q\cap[1,\infty)$ be well ordered and contain $1$, and put $S=S(D)$.
\begin{enumerate}
\item[\textup{(a)}] $S$ is well ordered and unbounded above.
\item[\textup{(b)}] For a finite real number $a\ge1$, let $b=\min\{s\in S:s>a\}$. If $h\in\Q$ satisfies $a<h<b$, then the monoid generated by $D\cup\{h\}$ equals $S+\N_0h$, is well ordered, and has $h$ as its least element strictly greater than $a$.
\end{enumerate}
\end{lemma}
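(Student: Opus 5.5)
For (a) I would use the classical Higman/Neumann-type argument for monoids generated by well-ordered sets of positive elements, simplified by the bound $D\subseteq[1,\infty)$. Since $1\in D$, we get $\N_0\subseteq S$, so $S$ is unbounded above. For well-orderedness it suffices to show that $S\cap(-\infty,N]$ is well ordered for each $N\in\N$. An element $s\le N$ of $S$ is a sum of at most $N$ elements of $D$, because each summand is $\ge1$ and all summands are nonnegative. Hence $S\cap(-\infty,N]$ is contained in $\bigcup_{k\le N}D^{(k)}$, where $D^{(k)}=D+\dots+D$ ($k$ summands) and $D^{(0)}=\{0\}$. A finite union of well-ordered subsets of $\Q$ is well ordered, so it remains to show that each $D^{(k)}$ is well ordered.

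I would prove this by induction on $k$, using the lemma that $A+B$ is well ordered whenever $A,B\subseteq\Q$ are well ordered. Suppose $(a_n+b_n)$ were a strictly decreasing sequence with $a_n\in A$ and $b_n\in B$. Since $A$ is well ordered, every sequence in $A$ has a nondecreasing subsequence; pass to one for $(a_n)$. Then $b_n=(a_n+b_n)-a_n$ is strictly decreasing along this subsequence, contradicting the well-ordering of $B$. Since a totally ordered set is well ordered if and only if it has no infinite strictly decreasing sequence, $A+B$ is well ordered. This part is routine.

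For (b), first note that $b$ exists by (a): the set $\{s\in S:s>a\}$ is nonempty because $S$ is unbounded, and it has a least element because $S$ is well ordered. The monoid generated by $D\cup\{h\}$ is $S+\N_0h$; the inclusion of this set in the monoid is clear, and the set contains $D\cup\{h\}$ and is closed under addition. Since $h>a\ge1$, the set $D\cup\{h\}$ is well ordered, lies in $[1,\infty)$ and contains $1$, so (a) applied to it gives well-orderedness.

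The step needing care is the claim that $h$ is the least element of $S+\N_0h$ strictly above $a$. Take $x=s+nh>a$ with $s\in S$ and $n\in\N_0$. If $n=0$, then $x=s>a$ forces $s\ge b>h$. If $n\ge1$, then $x\ge h$ because $s\ge0$, since $S\subseteq\Q_{\ge0}$. Together with $h\in S+\N_0h$ and $h>a$, this shows $h$ is the minimum. I expect no real obstacle here; the only delicate point is the finite-sum bound in (a), which is exactly where the hypothesis $D\subseteq[1,\infty)$ is used.
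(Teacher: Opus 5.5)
Your proof is correct and follows essentially the same route as the paper: the bound $D\subseteq[1,\infty)$ reduces $S\cap[0,N]$ to a finite union of finite sumsets $D+\dots+D$. Their well-ordering comes from extracting nondecreasing subsequences, and the minimality of $h$ is the same case split on $n=0$ versus $n\ge1$. The only small variation is in (b): you get well-ordering by applying (a) to $D\cup\{h\}$ (legitimate since $h\in\Q$ and $h>1$), whereas the paper covers $(S+\N_0h)\cap[0,N]$ by the finitely many translates $(S+nh)\cap[0,N]$; your version is slightly more economical, and you also correctly note that the existence of $b$ relies on (a).
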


\begin{proof}
Every sequence in a well-ordered set has an infinite nondecreasing subsequence: successively choose an index at which the minimum of the remaining tail is attained. It follows that a finite sum of well-ordered subsets of $\Q$ is well ordered. Otherwise, a strictly decreasing sequence of such sums would, by successively passing to subsequences for each of its finitely many summands, have all summands nondecreasing, a contradiction. A finite union of well-ordered subsets of a linearly ordered set is also well ordered, by comparing the minima of its nonempty intersections.

(a) For $N\ge0$, every $s\in S$ with $s\le N$ is a sum of at most $\lfloor N\rfloor$ elements of $D$, because each summand is at least $1$. Thus $S\cap[0,N]$ is contained in a finite union of well-ordered finite sums of $D$, including the empty sum $\{0\}$. It is well ordered. For any nonempty subset $E\subseteq S$, choose $s\in E$; a least element of $E\cap[0,s]$ is a least element of $E$. Finally, $\N_0\subseteq S$ proves unboundedness.

(b) The generated monoid is $S'=S+\N_0h$. For each $N\ge0$, its part in $[0,N]$ is contained in the finite union of $(S+nh)\cap[0,N]$ for $0\le n\le N/h$, so the preceding argument proves well-ordering. An element with $n=0$ and value $>a$ is at least $b>h$; one with $n\ge1$ is at least $h$, and $h$ itself belongs to $S'$. This proves the last assertion.
\end{proof}


\section{Construction of a lattice order}\label{sec:constr}

\subsection{Seeds}\label{sub:seed}

Assume (H1)--(H3). A \emph{seed} is a pair $(V_0,A_0)$ consisting of a finite-dimensional $\Q$-subspace $V_0\subseteq L_{\le1}$ and a lattice cone $A_0\subseteq V_0$ such that
\begin{equation}\label{eq:seedcone}
P_0:=A_0\cup\bigcup_{n\ge2}H_n
\end{equation}
is a positive cone in $L$. Two seeds will be used.

\begin{lemma}\label{lem:seeds}
The following pairs are seeds.
\begin{enumerate}
\item[\textup{(i)}] $(\Q1,\Q_{\ge0})$ is a seed.
\item[\textup{(ii)}] If $i\in L$ satisfies $i^2=-1$, put
\[
p=\tfrac12(t+i),\qquad q=\tfrac12(t-i),\qquad
V_0=\Q1\oplus\Q p\oplus\Q q,\qquad
A_0=\Q_{\ge0}1+\Q_{\ge0}p+\Q_{\ge0}q.
\]
Then $(V_0,A_0)$ is a seed, $1,p,q$ are linearly independent, and $(p,q)$ is the reduced decomposition of $i$ with respect to $A_0$.
\end{enumerate}
In both cases $P_0$ is rationally closed and directed,
$P_0\setminus\{0\}\subseteq\Q_{>0}\cup\bigcup_{n\ge1}H_n$,
$P_0\cap V_0=A_0$, and every reduced decomposition with respect to $A_0$ is reduced with respect to $P_0$.
\end{lemma}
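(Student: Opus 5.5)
The plan is to verify the seed conditions directly from the height calculus of Lemma~\ref{lem:layers}, and then to obtain the last assertion from Theorem~\ref{thm:global}.

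\emph{Structure of $V_0$ and $A_0$.} In case (i) everything is trivial. In case (ii), note that $d(t)=1$ and $d(i)=0$, since $i$ is algebraic. Hence $p=\tfrac12t+\tfrac12 i\in H_1$ and $q=\tfrac12 t-\tfrac12 i\in H_1$. For linear independence, suppose $a+bp+cq=0$. Then $\tfrac{b+c}{2}t+\tfrac{b-c}{2}i+a=0$. If $b+c\ne0$, the left side has height $1$ by Lemma~\ref{lem:layers}(b), a contradiction. So $c=-b$, and $a+bi=0$ with $i\notin\Q$ forces $a=b=0$. Thus $A_0$ is the coordinate cone of the basis $1,p,q$. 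By Lemma~\ref{lem:linext}(a), applied with $U=\Q1$ and $A=\Q_{\ge0}$ (or directly by coordinatewise comparison), $A_0$ is a lattice cone with coordinatewise positive parts. In particular $i=p-q$ has coordinates $(0,1,-1)$, so $i^+=p$ and $i^-=q$, and $(p,q)$ is the reduced decomposition of $i$ by Corollary~\ref{cor:lattice-crit}. Moreover, an element $a+bp+cq$ of $A_0$ with $b+c>0$ lies in $H_1$ (coefficient $(b+c)/2$, remainder of height $\le0$). Otherwise it equals $a\in\Q_{\ge0}$. This gives $A_0\setminus\{0\}\subseteq\Q_{>0}\cup H_1$, and hence the stated inclusion for $P_0\setminus\{0\}$.

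\emph{$P_0$ is a positive cone.} For pointedness, $A_0$ is pointed. The layers satisfy $H_s\cap(-H_r)=\emptyset$ by Lemma~\ref{lem:layers}(c). Finally, $\Q_{>0}$ (height $0$) cannot meet $-H_n$, and $H_n$ for $n\ge2$ cannot meet $-A_0\subseteq L_{\le1}$, by heights. For additivity, use $A_0\subseteq L_{\le1}\subseteq L_{<n}$ and $H_n+L_{<n}=H_n$ to get $A_0+H_n\subseteq H_n$, together with $H_n+H_m\subseteq H_{\max\{n,m\}}$. For multiplicativity, $H_nH_m=H_{n+m}$, and $A_0H_n\subseteq H_n\cup H_{n+1}$ because of the two types of elements of $A_0$ and Lemma~\ref{lem:layers}(d). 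The only genuine check is $A_0A_0$, which I expect to be the one step requiring care. Write $x=a+y$ and $x'=a'+y'$ with $a,a'\in\Q_{\ge0}$ and $y,y'\in\Q_{\ge0}p+\Q_{\ge0}q$. If $y=0$ or $y'=0$, the product $xx'$ is a nonnegative rational multiple of an element of $A_0$, so it lies in $A_0$. Otherwise $y,y'\in H_1$, so $yy'\in H_2$, while $ay'+a'y+aa'\in L_{\le1}\subseteq L_{<2}$; hence $xx'\in H_2\subseteq P_0$.

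\emph{The remaining properties.} Rational closure holds because each piece is stable under $\Q_{>0}$ (Lemma~\ref{lem:layers}(d)). Directedness: given $x\in L$, choose an integer $n\ge2$ with $n>d(x)$. Then $x=(t^n+x)-t^n$ with both terms in $H_n$. Next, $P_0\cap V_0=A_0$ because $V_0\subseteq L_{\le1}$ while every element of $H_n$ with $n\ge2$ has height $n$. For preservation of reduced decompositions, apply Theorem~\ref{thm:global} with $C=P_0$ and $U=V_0$. Hypothesis (a) is the lattice property of $A_0=P_0\cap V_0$. Hypothesis (b) holds because $P_0\setminus V_0=\bigcup_{n\ge2}H_n$, and $H_n+V_0\subseteq H_n+L_{<n}=H_n$, which is disjoint from $V_0$. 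The globalisation theorem then yields that reduced decompositions with respect to $A_0$ are reduced with respect to $P_0$.
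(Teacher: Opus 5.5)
Your proof is correct and follows essentially the same route as the paper: height bookkeeping via Lemma~\ref{lem:layers} for the cone axioms (with the same rational-versus-$H_1$ case split for products inside $A_0$), directedness via $x=(t^n+x)-t^n$, and Theorem~\ref{thm:global} applied to $(P_0,V_0)$ for global reducedness. The differences are cosmetic. You prove independence of $1,p,q$ by a height comparison rather than by transcendence of $t$ over $\Q(i)$. You also read off $i^+=p$ from the coordinatewise formula of Lemma~\ref{lem:linext}(a), whereas the paper uses disjoint supports and Theorem~\ref{thm:reduced}.
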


\begin{proof}
For seed (ii), the element $t$ is transcendental over $\Q(i)$, since it has nonzero valuation and $\Q(i)/\Q$ is algebraic. A relation
$a+bp+cq=a+(b+c)t/2+(b-c)i/2=0$ with $a,b,c\in\Q$ first gives $b+c=0$, then $a=0$ and $b-c=0$, because $i\notin\Q$. Thus $1,p,q$ are independent. The cones $A_0$ in both cases are coordinate lattice cones. We have $d(1)=0$ and $d(t)=1$; in case (ii), $d(i)=0$ and $d(p)=d(q)=1$. Thus, in both cases, every nonzero element of $A_0$ is either a positive rational or belongs to $H_1$.

The union \eqref{eq:seedcone} is closed under addition and nonnegative rational scaling: $A_0$ is a cone, the layers have the properties in Lemma~\ref{lem:layers}, and $A_0+H_n\subseteq H_n$ for $n\ge2$. For multiplication, zero factors give zero. Two elements of $A_0$ have product in $A_0$ if one is rational; otherwise both belong to $H_1$ and their product belongs to $H_2$. A nonzero element of $A_0$ times an element of $H_n$ lies in $H_n$ or $H_{n+1}$, according as the first factor is rational or has height $1$. Finally $H_mH_n=H_{m+n}$ for $m,n\ge2$. Hence $P_0$ is multiplicatively closed. Its support in $\Q_{>0}$ and positive rational-leading layers of positive height proves pointedness. Given $x\in L$, choose an integer $m>\max\{1,d(x)\}$. Then $t^m+x,t^m\in H_m\subseteq P_0$, which proves directedness.

The whole layer $H_1$ need not belong to $P_0$. For seed (i), $t\notin P_0$. For seed (ii), $t=p+q\in A_0$, but $t+2i=3p-q\notin A_0$ and its height is $1$, so $t+2i\notin P_0$.

Since $V_0\subseteq L_{\le1}$, the layers with integer height at least $2$ are disjoint from $V_0$ and stable under addition of $V_0$. Thus $P_0\cap V_0=A_0$, and Theorem~\ref{thm:global} proves global reducedness and order convexity of $V_0$. In case (ii), $i=p-q$ and the two coordinates have disjoint supports in $A_0$, so Theorem~\ref{thm:reduced} gives the asserted reduced decomposition.
\end{proof}

\subsection{Configurations and the extension steps}\label{sub:step}

A \emph{configuration} is a tuple $(D,a,S,U,A,P)$ satisfying:
\begin{enumerate}
\item[\textup{(I1)}] $a\in\R_{\ge1}$ is finite, $D\subseteq\Q\cap[1,a]$ is well ordered, $1\in D$, $\sup D=a$, and $S=S(D)$.
\item[\textup{(I2)}] $U\subseteq L_{\le a}$ is a $\Q$-subspace and $A$ is a lattice cone on $U$.
\item[\textup{(I3)}] $P=A\cup\bigcup_{s\in S,\ s>a}H_s$ is a rationally closed directed positive cone on $L$, and
\[
P\setminus\{0\}\subseteq\Q_{>0}\cup\bigcup_{s\in S\setminus\{0\}}H_s.
\]
\item[\textup{(I4)}] $A=P\cap U$, $U$ is order convex in $L$ with respect to $P$, and every reduced decomposition with respect to $A$ is reduced with respect to $P$.
\end{enumerate}
We call a configuration \emph{full} if it also satisfies
\[
\text{(I5)}\qquad L_{<a}\subseteq U.
\]
Each of the two seeds in Lemma~\ref{lem:seeds} gives a configuration with $a=1$, $D=\{1\}$, $S=\N_0$, $U=V_0$ and $A=A_0$. These initial configurations are not required to satisfy (I5); all later configurations constructed below do. We use only these two seeds, whose cones $A_0$ both contain $1$. The general definition of a seed does not require $1\in A_0$: for instance $(\Q t,\Q_{\ge0}t)$ is also a seed.

\begin{proposition}[Successor step]\label{prop:succ}
Let $(D,a,S,U,A,P)$ be a configuration and let $b=\min\{s\in S:s>a\}$. Choose $h\in\Q$ with $a<h<b$, put
\[
D'=D\cup\{h\},\qquad S'=S+\N_0h,\qquad U'=\Q t^h\oplus L_{<h},
\]
and choose representatives $x_j\in L_{<h}$ of a basis $(x_j+U)_{j\in J}$ of $L_{<h}/U$. Then
\begin{equation}\label{eq:decomp}
U'=U\oplus\Q t^h\oplus\bigoplus_{j\in J}\Q(t^h+x_j).
\end{equation}
Set
\begin{equation}\label{eq:Aprime}
A'=A+\Q_{\ge0}t^h+\sum_{j\in J}\Q_{\ge0}(t^h+x_j),\qquad
P'=A'\cup\bigcup_{\substack{s\in S'\\s>h}}H_s.
\end{equation}
All sums are finite. Then $(D',h,S',U',A',P')$ is a full configuration, $P\subseteq P'$, $A'\setminus A\subseteq H_h$, and
\[
A'\cap U=P'\cap U=A.
\]
The subspace $U$ is order convex in $U'$ with respect to $A'$, and every reduced decomposition with respect to $A$ is reduced with respect to both $A'$ and $P'$.
\end{proposition}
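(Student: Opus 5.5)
The plan is to read the new cone $A'$ as a coordinate extension of $A$ in the sense of Lemma~\ref{lem:linext}(a), and then to check (I1)--(I5) for the new tuple, using the layer calculus of Lemma~\ref{lem:layers}.

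\emph{Decomposition and (I1), (I2), (I5).} Since $U\subseteq L_{\le a}\subseteq L_{<h}$ (as $a<h$), the choice of the $x_j$ gives $L_{<h}=U\oplus\bigoplus_j\Q x_j$. Since $t^h$ has height $h$, $t^h\notin L_{<h}$, so $U'=\Q t^h\oplus L_{<h}$ is a direct sum. Replacing each $x_j$ by $(t^h+x_j)-t^h$ is a unimodular change of basis, which yields \eqref{eq:decomp}. For (I1), Lemma~\ref{lem:monoid}(b) shows that $S'$ is well ordered, that $S'=S(D')$, and that $h=\sup D'$. Lemma~\ref{lem:linext}(a), applied with $e=t^h$ and $e_j=t^h+x_j$, shows that $A'$ is a lattice cone on $U'$. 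It also gives $A'\cap U=A$, that $U$ is order convex in $U'$, and that reduced decompositions for $A$ remain reduced for $A'$. We have $U'\subseteq L_{\le h}$, and (I5) holds by construction.

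\emph{Support and $A'\setminus A\subseteq H_h$.} An element of $A'\setminus A$ has the form $u+ct^h+\sum_jc_j(t^h+x_j)$ with $u\in A$, all coefficients nonnegative, and some coefficient positive. It therefore equals $(c+\sum_j c_j)t^h$ plus a term of height $<h$, so it lies in $H_h$. Together with (I3) for $P$, and the fact that elements of $A$ have height $\le a$, this gives
\[
P'\setminus\{0\}\subseteq\Q_{>0}\cup\bigcup_{s\in S'\setminus\{0\}}H_s .
\]
Pointedness then follows from Lemma~\ref{lem:layers}(c). Additive and rational closure follow from three facts: $A'$ is a cone, $\lambda H_s=H_s$, and $A'+H_s\subseteq H_s$ for $s>h$, because $A'\subseteq L_{\le h}$. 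Directedness follows from $t^m,\,t^m+x\in H_m$ for a large integer $m\in S'$. For the inclusion $P\subseteq P'$: we have $A\subseteq A'$, and every $s\in S$ with $s>a$ satisfies $s\ge b>h$.

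\emph{Multiplicative closure (main obstacle).} Take nonzero $x\in H_s$ and $y\in H_r$ in $P'$, or rational factors; products with a positive rational factor stay in $P'$ trivially. Then $xy\in H_{s+r}$ with $s+r\in S'$. If $s+r>h$, then $xy\in P'$. If $s+r\le h$, then neither factor lies in a layer involving $h$, since $s,r>0$. Hence $x,y\in A\subseteq P$, and $xy\in P$. Now $xy\in P$ has height $s+r\le h<b$, so it cannot lie in an old layer $H_u$ with $u>a$, $u\in S$, because such $u\ge b$. Therefore $xy\in A\subseteq A'$. This case split is the delicate point, and it is where the choice $h<b$ is used.

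\emph{(I4) and preservation.} The layers $H_s$ with $s>h$ are disjoint from $L_{\le h}\supseteq U'$, so $P'\cap U'=A'$, and hence $P'\cap U=A'\cap U=A$. Moreover $(P'\setminus U')+U'\subseteq\bigcup_{s>h}(H_s+L_{<s})=\bigcup_{s>h}H_s$ by Lemma~\ref{lem:layers}(d), and this set lies in $P'\setminus U'$. Theorem~\ref{thm:global} therefore gives order convexity of $U'$ in $L$, and shows that reduced decompositions for $A'$ are reduced for $P'$. Composing this with preservation from $A$ to $A'$ yields the final assertion.
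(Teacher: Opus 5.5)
Your proposal is correct and matches the paper's proof in all essentials: Lemma~\ref{lem:linext}(a) for the coordinate extension, the height argument giving $A'\setminus A\subseteq H_h$ and the support condition, the layer calculus for the cone axioms, and Theorem~\ref{thm:global} applied to $(P',U')$ for (I4). The differences are minor. You sort products by whether $s+r>h$ rather than by where the factors come from; in the case $s+r\le h$, the inclusion $P\subseteq P'$ already gives $xy\in P'$, so your extra height argument is redundant. You also obtain global preservation by composing Lemma~\ref{lem:linext}(a) with Theorem~\ref{thm:global} rather than by invoking Corollary~\ref{cor:preserve}, which is equally valid.
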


\begin{proof}
We have $U\subseteq L_{\le a}\subseteq L_{<h}$. In a relation among the summands in \eqref{eq:decomp}, the coefficient of $t^h$ must vanish because all remaining terms have height $<h$; comparison in $L_{<h}/U$ then makes each coefficient of $x_j$ vanish. This proves directness. The same basis property proves that the summands span $U'$. Lemma~\ref{lem:linext}(a) makes $A'$ a lattice cone, proves $A'\cap U=A$, and proves order convexity and local preservation. The set $D'$ is well ordered, $\sup D'=h$, and Lemma~\ref{lem:monoid}(b) gives the required monoid $S'$, proving (I1) and (I2).

If a positive coordinate expression for an element of $A'$ uses at least one new coordinate, the sum of its new coefficients is positive. That expression has the form $ct^h+u$ with $c\in\Q_{>0}$ and $u\in L_{<h}$. Hence $A'\setminus A\subseteq H_h$, and every nonzero element of $A'$ is either a positive rational or belongs to some $H_s$ with $s\in S'\setminus\{0\}$.

Every old layer indexed by $s\in S$ with $s>a$ has $s\ge b>h$, so it is still present in $P'$. Thus $P\subseteq P'$. The set $P'$ is closed under addition and nonnegative rational scaling: these properties hold within $A'$, addition of $A'\subseteq L_{\le h}$ leaves each higher layer unchanged, and sums of higher layers are controlled by Lemma~\ref{lem:layers}(d). The support just proved gives pointedness.

For multiplicative closure, it suffices to consider nonzero factors $x,y\in P'$:
\begin{enumerate}
\item If $x,y\in A$, then $xy\in P\subseteq P'$.
\item If $x\in A'\setminus A$ and $y\in A$, then $x\in H_h$. If $y\in\Q_{>0}$, the rational closure of $A'$, established in Lemma~\ref{lem:linext}(a), gives $xy\in A'$. Otherwise $y\in H_r$ for some $r\in S\setminus\{0\}$, so $r\ge1$ and $xy\in H_{h+r}\subseteq P'$.
\item If $x,y\in A'\setminus A$, then $xy\in H_{2h}\subseteq P'$.
\item If $x\in H_s$ with $s\in S'$ and $s>h$, then a positive rational factor $y$ keeps the product in $H_s$. Every other possible nonzero factor belongs to some $H_r$ with $r\in S'\setminus\{0\}$, and the product belongs to $H_{s+r}\subseteq P'$.
\end{enumerate}
These cases, with the two factors interchanged when necessary, exhaust all products. For directedness, choose $m\in S'$ with $m>\max\{h,d(x)\}$ for any $x\in L$; then $x=(t^m+x)-t^m$ is a difference of two elements of $P'$. This proves (I3).

All higher layers have height $>h$, so $P'\cap U'=A'$. They are stable under addition of $U'\subseteq L_{\le h}$. Theorem~\ref{thm:global}, applied to the \emph{new} pair $(P',U')$, proves (I4). Since $U\subseteq U'$, the restriction equality $P'\cap U=A$ follows. Together with the order convexity already proved, Corollary~\ref{cor:preserve}, applied to $(P,U)$ and $(P',U')$, gives the asserted preservation with respect to $P'$. Finally $L_{<h}\subseteq U'$ proves (I5).
\end{proof}

\begin{proposition}[Limit step]\label{prop:limit}
Let $\lambda$ be a nonzero limit ordinal. Consider configurations
\[
(D_\beta,a_\beta,S_\beta,U_\beta,A_\beta,P_\beta)_{\beta<\lambda}
\]
with strictly increasing cuts $a_\beta$ and finite supremum $a=\sup_{\beta<\lambda}a_\beta$. Assume that for $\beta<\gamma<\lambda$:
\begin{enumerate}
\item[\textup{(L1)}] $D_\beta\subseteq D_\gamma$, $U_\beta\subseteq U_\gamma$, $A_\beta\subseteq A_\gamma$ and $P_\beta\subseteq P_\gamma$, and
$D_\gamma\setminus D_\beta\subseteq(a_\beta,\infty)$;
\item[\textup{(L2)}] $P_\gamma\cap U_\beta=A_\beta$, every reduced decomposition with respect to $A_\beta$ is reduced with respect to $P_\gamma$, and $U_\beta$ is order convex in $U_\gamma$ with respect to $A_\gamma$.
\end{enumerate}
Put $D=\bigcup_{\beta<\lambda}D_\beta$, $S=S(D)$ and $V=L_{<a}$, and assume $\bigcup_{\beta<\lambda}U_\beta=V$. Write $A_V=\bigcup_{\beta<\lambda}A_\beta$.
\begin{enumerate}
\item[\textup{(a)}] If $a\notin S$, set $U=V$ and $A=A_V$.
\item[\textup{(b)}] If $a\in S$, set $U=\Q t^a\oplus V$ and $A=A_V\cup H_a$.
\end{enumerate}
Then in either case
\[
P=A\cup\bigcup_{\substack{s\in S\\s>a}}H_s
=\bigcup_{\beta<\lambda}P_\beta
\]
makes $(D,a,S,U,A,P)$ a full configuration. Conditions \textup{(L1)} and \textup{(L2)} continue to hold with the new stage in place of $\gamma$, and reduced decompositions with respect to $A$ are reduced with respect to $P$.
\end{proposition}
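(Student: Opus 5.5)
The plan is to assemble the new stage from Theorem~\ref{thm:union}. In case~(b) I then add one lexicographic coordinate via Lemma~\ref{lem:linext}(b). Finally I globalise with Theorem~\ref{thm:global}.

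\emph{The union on $V$.} First I check that the families $(P_\beta)_{\beta<\lambda}$ and $(U_\beta)_{\beta<\lambda}$ satisfy the hypotheses of Theorem~\ref{thm:union}. Hypothesis~(a) is (I2) together with (I4) at stage $\beta$. Hypothesis~(b) is exactly (L2). The theorem then gives the following.
\begin{itemize}
\item $C:=\bigcup_\beta P_\beta$ is a pointed directed cone, closed under multiplication.
\item $C\cap V=A_V$ is a lattice cone on $V$.
\item $C\cap U_\beta=A_\beta$, and $U_\beta$ is order convex in $V$.
\item Reduced decompositions with respect to $A_\beta$ remain reduced with respect to $C$.
\end{itemize}
$C$ is rationally closed, being a union of rationally closed cones. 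It inherits the support condition $C\setminus\{0\}\subseteq\Q_{>0}\cup\bigcup_{s\in S\setminus\{0\}}H_s$, since $S_\beta\subseteq S$. For (I1), $\sup D=a$ and $D\subseteq\Q\cap[1,a]$ are clear. For well-ordering, take a nonempty $E\subseteq D$ and pick $e\in E\cap D_\beta$. Then $e\le a_\beta$, while by (L1) every element of $D\setminus D_\beta$ exceeds $a_\beta$. Hence $E\cap(-\infty,e]\subseteq D_\beta$, and this set has a least element.

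\emph{Identifying $C$ with $P$.} This is the step I expect to be the main bookkeeping obstacle. Every element of $P_\beta$ lies in $A_\beta\subseteq A_V$ or in some $H_s$ with $s\in S_\beta$ and $s>a_\beta$. I split according to $s$ versus $a$.
\begin{itemize}
\item If $s<a$, choose $\gamma>\beta$ with $a_\gamma>s$; since $\gamma$ is noninitial, the stage is full. Then $H_s\subseteq P_\gamma\cap L_{<a_\gamma}\subseteq P_\gamma\cap U_\gamma=A_\gamma\subseteq A_V$.
\item If $s=a$, then $a\in S$, so we are in case~(b) and $H_a\subseteq A$.
\item If $s>a$, then $H_s$ is one of the layers in $P$.
\end{itemize}
Hence $C\subseteq P$. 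Conversely, let $s\in S$ with $s\ge a$ (the value $s=a$ occurring only in case~(b)). Then $s$ is a finite sum of elements of $D$, so $s\in S_\beta$ for some large $\beta$. Since $s\ge a>a_\beta$, we get $H_s\subseteq P_\beta$. Together with $A_V\subseteq C$ this gives $P\subseteq C$. Thus $P=C$ satisfies (I3).

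\emph{The cone on $U$ and globalisation.} In case~(a), $U=V$ and $A=A_V$, and we already have a lattice cone. In case~(b), $A_V\cup H_a$ is precisely the lexicographic cone of Lemma~\ref{lem:linext}(b) with $e=t^a$, since $H_a=\{rt^a+u: r\in\Q_{>0},\ u\in V\}$. That lemma makes $A$ a lattice cone on $U\subseteq L_{\le a}$, with $A\cap V=A_V$, $V$ order convex in $U$, and reduced decompositions on $V$ preserved. This gives (I2).

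The layers $H_s$ with $s>a$ have height greater than $a$, so $P\cap U=A$. By Lemma~\ref{lem:layers}(d) they are stable under addition of $U\subseteq L_{\le a}$, which is hypothesis~(b) of Theorem~\ref{thm:global}. That theorem then yields (I4). Condition (I5) holds because $L_{<a}=V\subseteq U$.

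\emph{Passing (L1) and (L2) to the new stage.} (L1) is immediate. For (L2), the equality $P\cap U_\beta=A_\beta$ and the preservation of reduced decompositions come from Theorem~\ref{thm:union}. Order convexity of $U_\beta$ in $U$ is obtained by composing two facts: $U_\beta$ is convex in $V$ (from the union theorem), and $V$ is convex in $U$ (trivially in case~(a), by Lemma~\ref{lem:linext}(b) in case~(b)).
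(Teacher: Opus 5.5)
Your proof is correct and follows the paper's proof: the union theorem (Theorem~\ref{thm:union}) on $V$, identification of $C=\bigcup_\beta P_\beta$ with $P$, the lexicographic extension along $\Q t^a$ in case~(b), globalisation via Theorem~\ref{thm:global}, and transitivity of order convexity for (L2).

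One step needs a small repair. In the case $s<a$ you use that noninitial stages are full, but fullness is not a hypothesis of this proposition; only $\bigcup_\beta U_\beta=V$ is given. Instead, note that $H_s\subseteq L_{\le s}\subseteq V$ and $H_s\subseteq P_\beta\subseteq C$, so $H_s\subseteq C\cap V=A_V$ by the union theorem. This is how the paper argues: it writes $C=(C\cap V)\cup(C\setminus V)$ and observes that $C\setminus V$ consists only of layers with index $\ge a$.
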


\begin{proof}
Since $\lambda$ is a limit ordinal and the cuts strictly increase, $a_\beta<a$ for every $\beta<\lambda$. To prove that $D$ is well ordered, take a nonempty $E\subseteq D$ and choose $e\in E\cap D_\beta$. Every element of $D\setminus D_\beta$ is greater than $a_\beta\ge e$, by (L1). Therefore the least element of $E\cap D_\beta$ is a least element of $E$. Also $1\in D\subseteq\Q\cap[1,a]$ and $\sup D=a$. Every element of $S(D)$ uses only finitely many generators, all lying in a common $D_\beta$, so
\[
S=\bigcup_{\beta<\lambda}S_\beta.
\]
Lemma~\ref{lem:monoid} proves that $S$ is well ordered.

Set $C=\bigcup_{\beta<\lambda}P_\beta$. Apply Theorem~\ref{thm:union} using (I4) at each stage and (L2) between stages. It follows that $C$ is a rationally closed directed positive cone, that
$C\cap V=A_V$ is a lattice cone on $V$, and that all earlier reduced decompositions are reduced with respect to $C$. Moreover $C\cap U_\beta=A_\beta$, and each $U_\beta$ is order convex in $V$ with respect to $A_V$.

We claim
\[
C=A_V\cup\bigcup_{\substack{s\in S\\s\ge a}}H_s.
\]
Indeed, if $x\in C\setminus V$, then $d(x)\ge a$. By (I3) at a stage containing $x$, it belongs to some $H_s$ with $s\in S$ and $s\ge a$. Conversely, if $s\in S$ and $s\ge a$, choose $\beta$ with $s\in S_\beta$. Since $s\ge a>a_\beta$, the whole layer $H_s$ is already contained in $P_\beta$. This proves the claim.

If $a\notin S$, the claim gives $C=A_V\cup\bigcup_{s\in S,\ s>a}H_s$, so the specified $P$ equals $C$. If $a\in S$, then $a\in\Q$ and
$H_a=\{ct^a+v:c\in\Q_{>0},v\in V\}$. Lemma~\ref{lem:linext}(b) makes $A=A_V\cup H_a$ the lexicographic lattice cone on $U=\Q t^a\oplus V$. The same claim again gives $P=C$. Thus, in both cases, pointedness, directedness, rational closure and multiplicative closure are inherited from the increasing union; no additional product cases are required. The support assertion in (I3) also follows from the support of its stages.

We have $U\subseteq L_{\le a}$ and $P\cap U=A$, since every remaining layer has height $>a$. These remaining layers are stable under addition of $U$. Theorem~\ref{thm:global}, now applied to $(P,U)$, proves (I4). Also $L_{<a}=V\subseteq U$, proving (I5).

Finally, $P=C$ already preserves all earlier decompositions and satisfies $P\cap U_\beta=A_\beta$. The subspace $U_\beta$ is order convex in $V$, and $V$ is order convex in $U$ by the lexicographic lemma in case (b), or by equality in case (a). The induced orders agree on these subspaces, so order convexity is transitive here. This proves (L2) for the new stage. The inclusions in (L1) are immediate, and its assertion about new elements of $D$ follows from the corresponding assertion at the earlier stages.
\end{proof}

\subsection{The main theorem}\label{sub:main}

\begin{theorem}\label{thm:main}
Assume \textup{(H1)--(H3)}. Then $L$ carries a lattice positive cone $P$ with
\begin{equation}\label{eq:props}
1\in P,\qquad \Q_{>0}\subseteq P,\qquad P\cap\Q=\Q_{\ge0},\qquad
\{x\in P\setminus\{0\}:x^{-1}\in P\}=\Q_{>0}.
\end{equation}
If $L$ also contains $i$ with $i^2=-1$, then $P$ may be chosen so that
\begin{equation}\label{eq:i}
0\vee i=\tfrac12(t+i),\qquad 0\vee(-i)=\tfrac12(t-i).
\end{equation}
\end{theorem}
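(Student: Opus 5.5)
We run a transfinite recursion of configurations, starting from one of the seeds of Lemma~\ref{lem:seeds}: seed (i) in general, seed (ii) when $i\in L$ and \eqref{eq:i} is wanted. This gives a configuration $\mathcal K_0$ with cut $a_0=1$.

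Given a configuration $\mathcal K_\alpha$ whose cut $a_\alpha$ is finite, Proposition~\ref{prop:succ} produces $\mathcal K_{\alpha+1}$ with cut $h_\alpha\in\Q$ satisfying $a_\alpha<h_\alpha<b_\alpha$. The conditions (L1) and (L2) between $\alpha$ and $\alpha+1$ follow directly from that proposition: it gives $D'\setminus D=\{h\}$ with $h>a$, it gives $P'\cap U=A$, it gives preservation with respect to $P'$, and it gives order convexity of $U$ in $U'$. At a limit $\lambda$ with $\sup_{\beta<\lambda}a_\beta$ finite, we apply Proposition~\ref{prop:limit}. Its hypothesis $\bigcup_\beta U_\beta=L_{<a}$ holds for two reasons. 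First, every $x$ with $d(x)<a$ satisfies $d(x)<a_\beta$ for some $\beta$, and (I5) at the successor stage $\beta+1$ puts $x$ in $U_{\beta+1}$. Second, $U_\beta\subseteq L_{\le a_\beta}\subseteq L_{<a}$.

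Transitivity of (L1) and (L2) for $\beta<\gamma$ is maintained inductively. Across successor steps this uses Corollary~\ref{cor:preserve} together with the fact that order convexity composes when the induced cones agree. Across limits, Proposition~\ref{prop:limit} supplies it.

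\emph{Termination.} The map $\alpha\mapsto a_{\alpha+1}\in\Q$ is injective, since the cuts strictly increase. Hence the recursion cannot run through $\omega_1$ while the cuts stay bounded, and there is a countable limit $\lambda$ at which $\sup_{\beta<\lambda}a_\beta=\infty$. This is the step I expect to need the most care: one must check that a bounded supremum is always handled by Proposition~\ref{prop:limit}, and that unboundedness can actually be reached. For the latter, observe that each successor step increases the cut. If the cuts remained bounded at every stage, we would obtain an injection $\omega_1\to\Q$, which is impossible.

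\emph{The final cone.} At $\lambda$ we set $P=\bigcup_{\beta<\lambda}P_\beta$. Theorem~\ref{thm:union}, fed by (I4) and (L2), shows that $P$ is a multiplicatively closed pointed directed cone. It also shows that $P\cap\bigcup_\beta U_\beta$ is a lattice cone whose reduced decompositions are reduced in $P$. Since $\bigcup_\beta U_\beta\supseteq\bigcup_\beta L_{<a_\beta}=L$ by (I5) and Lemma~\ref{lem:layers}(a), every element of $L$ has a reduced decomposition in $P$, and Corollary~\ref{cor:lattice-crit} makes $P$ a lattice positive cone.

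We have $1\in A_0\subseteq P$, and $P$ is rationally closed, so $\Q_{>0}\subseteq P$. By (I3) at each stage, every nonzero element of $P$ is either in $\Q_{>0}$ or in some $H_s$ with $s\ge1$, so it has height $0$ or height $\ge1$. Since $-1$ has height $0$ and is not in $\Q_{>0}$, this gives $P\cap\Q=\Q_{\ge0}$. Now suppose $x,x^{-1}\in P\setminus\{0\}$. Then $d(x)+d(x^{-1})=0$ with both heights in $\{0\}\cup[1,\infty)$, so both are $0$. Elements of $P$ of height $0$ lie in $\Q_{>0}$, hence $x\in\Q_{>0}$. Finally, for seed (ii), Lemma~\ref{lem:seeds} says $(p,q)$ is the reduced decomposition of $i$ in $A_0$, and this is preserved in $P$. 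Theorem~\ref{thm:reduced} then gives \eqref{eq:i}.
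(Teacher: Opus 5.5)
Your proposal is correct and follows essentially the same route as the paper: the same seeds, successor steps via Proposition~\ref{prop:succ}, finite-cut limits via Proposition~\ref{prop:limit}, termination through the injection $\alpha\mapsto a_{\alpha+1}\in\Q$, the final union via Theorem~\ref{thm:union}, and the support condition (I3) for the properties of $P\cap\Q$ and of inverses. The one small difference is how you carry (L2) back to an older stage $\beta<\alpha$ across a successor step. You apply Corollary~\ref{cor:preserve} directly to $(P_\beta,U_\beta)$ and $(P_{\alpha+1},U_{\alpha+1})$, using $P_{\alpha+1}\cap U_\beta=A_\beta$ and composed order convexity, whereas the paper first shows that an $A_\beta$-reduced decomposition is $A_\alpha$-reduced and then invokes Proposition~\ref{prop:succ}; both arguments are valid.
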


\begin{proof}
Use seed (i) of Lemma~\ref{lem:seeds}, or seed (ii) when the formulae \eqref{eq:i} are required. In both cases $1\in A_0$ and $\Q_{\ge0}\subseteq A_0$. With $a_0=1$, $D_0=\{1\}$, $S_0=\N_0$ and $U_0=V_0$, that lemma gives (I1)--(I4), including global reducedness at the initial stage.

We construct configurations by transfinite recursion, maintaining strictly increasing cuts and (L1)--(L2) from Proposition~\ref{prop:limit} between every pair of constructed stages. We also maintain (I5) at every noninitial stage. Choices of bases are made in the usual set-theoretic setting with the axiom of choice.

At a successor stage, Proposition~\ref{prop:succ} gives a full configuration, with $a_{\alpha+1}=h>a_\alpha$. Its new generator $h$ is greater than every previous cut, so (L1) is preserved. For an older $\beta<\alpha$, the restriction equalities give
\[
P_{\alpha+1}\cap U_\beta
=(P_{\alpha+1}\cap U_\alpha)\cap U_\beta
=A_\alpha\cap U_\beta=A_\beta.
\]
Let $x=p-q$ be a reduced decomposition with respect to $A_\beta$. By induction it is reduced globally with respect to $P_\alpha$. To verify reducedness with respect to $A_\alpha$, take any representation $x=a-b$ with $a,b\in A_\alpha$. Global reducedness gives $c\in P_\alpha$ with $a=p+c$ and $b=q+c$. Since $a,p\in U_\alpha$, we also have $c=a-p\in U_\alpha$, and hence
\[
c\in P_\alpha\cap U_\alpha=A_\alpha.
\]
Thus $(p,q)$ is reduced with respect to $A_\alpha$, and Proposition~\ref{prop:succ} preserves it with respect to $P_{\alpha+1}$. The same proposition handles $\beta=\alpha$. Order convexity passes through $U_\beta\subseteq U_\alpha\subseteq U_{\alpha+1}$, since the induced orders on the older subspaces are unchanged. Thus all parts of (L2) are preserved.

At a nonzero limit ordinal $\delta$, put $a=\sup_{\beta<\delta}a_\beta$. If $a=+\infty$, stop at this first such limit ordinal, without creating a configuration with an infinite cut. If $a<+\infty$, strict increase implies $a_\beta<a$ for every $\beta<\delta$. Thus $\bigcup_{\beta<\delta}U_\beta\subseteq L_{<a}$. Conversely, for $x\in L_{<a}$, choose a noninitial stage $1\le\beta<\delta$ with $d(x)<a_\beta$. This is possible because $\delta$ is a nonzero limit ordinal. Then (I5) at that stage gives $x\in U_\beta$. Consequently the union is $L_{<a}$, and every hypothesis of Proposition~\ref{prop:limit} has been verified. It produces the required full configuration and preserves (L1)--(L2).

If the recursion continued through every $\alpha<\omega_1$, the successor cuts $a_{\alpha+1}\in\Q$ would be distinct and strictly increasing, giving an injection of $\omega_1$ into $\Q$. This is impossible. Since the successor step is always available and every finite limit cut has just been treated, the recursion must stop at a countable nonzero limit ordinal $\lambda$ with $\sup_{\beta<\lambda}a_\beta=+\infty$.

Every $x\in L$ belongs to some noninitial $U_\beta$, because its height is finite and (I5) holds there. Hence $\bigcup_{\beta<\lambda}U_\beta=L$. Applying Theorem~\ref{thm:union}, with (I4) and (L2) as its hypotheses, shows that
\[
P=\bigcup_{\beta<\lambda}P_\beta
\]
is a lattice cone on $L$ and preserves all earlier reduced decompositions. The theorem also gives multiplicative closure, since every $P_\beta$ is a positive cone. Thus $P$ is a lattice positive cone.

The initial cone gives $1\in P$ and $\Q_{>0}\subseteq P$. By (I3), each nonzero element of $P$ is either a positive rational or belongs to a layer $H_s$ with $s\ge1$. Thus $P\cap\Q=\Q_{\ge0}$. If a positive element $x$ is not rational, its height is at least $1$, whereas $d(x^{-1})=-d(x)<0$, so $x^{-1}\notin P$. Positive rationals and their inverses do belong to $P$, proving \eqref{eq:props}.

For seed (ii), $(p,q)$ is the initial reduced decomposition of $i$. It remains reduced with respect to $P$, so Theorem~\ref{thm:reduced} gives \eqref{eq:i}.
\end{proof}

\begin{corollary}\label{cor:total}
Let $P$ be a cone constructed in Theorem~\ref{thm:main}, and let $E\subseteq L$ be a subfield. If $P\cap E$ induces a total order on $E$, then $E=\Q$. In particular the order induced by $P$ on $L$ is not total when $L\ne\Q$.
\end{corollary}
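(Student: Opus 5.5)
The plan is to show that any subfield $E\ne\Q$ contains an element $x$ such that neither $x$ nor $-x$ lies in $P$. Two facts from the proof of Theorem~\ref{thm:main} will be used: every nonzero element of $P$ is a positive rational or lies in some $H_s$ with $s\ge1$ (property (I3), inherited by the union), and $P\cap\Q=\Q_{\ge0}$. Every subfield of $L$ contains $\Q$, since the characteristic is $0$, so suppose $E\supsetneq\Q$ and $P\cap E$ is total. Pick $y\in E\setminus\Q$.

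The first step treats elements of height $\ge1$. For every $x\in E$, either $x\in P$ or $-x\in P$. If $x\notin\Q$, then $\pm x$ is a nonzero non-rational element of $P$, so $d(x)\ge1$ by the support property. Hence every element of $E\setminus\Q$ has height at least $1$. Apply this to $y$ and to $y^{-1}$; the latter is in $E$ and not rational. We get $d(y)\ge1$ and $d(y^{-1})=-d(y)\ge1$, which is a contradiction. So no such $y$ exists and $E=\Q$.

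The final assertion follows by taking $E=L$: if the order on $L$ were total, then $L=\Q$.

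The argument is essentially immediate. The one point to check is that the support statement $P\setminus\{0\}\subseteq\Q_{>0}\cup\bigcup_{s\ge1}H_s$ really holds for the final union cone. It does, because each element of $P$ lies in some $P_\beta$, where (I3) applies, and every $s\in S_\beta\setminus\{0\}$ satisfies $s\ge1$, since $D_\beta\subseteq[1,\infty)$. By Lemma~\ref{lem:layers}(c), elements of $H_s$ have height $s\ge1$. Note that the argument never needs $\sqrt{-1}\in E$; inversion alone suffices.
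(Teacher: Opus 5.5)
Your proof is correct and is essentially the paper's argument. The paper invokes the inversion property $\{x\in P\setminus\{0\}:x^{-1}\in P\}=\Q_{>0}$ from \eqref{eq:props}, which was itself proved from exactly the support condition and the identity $d(x^{-1})=-d(x)$ that you use directly, and your one deviation—applying totality separately to $y$ and to $y^{-1}$—neatly sidesteps the step, left implicit in the paper, that the inverse of a positive element of a totally ordered subfield is again positive.
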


\begin{proof}
Let $e\in E$, $e\ne0$. Since the induced order is total, $e>0$ or $-e>0$ in this order; in either case one of $\pm e$ lies in $P\cap E\subseteq P$, is nonzero, and its inverse is again positive, so \eqref{eq:props} gives $\pm e\in\Q_{>0}$. Hence $e\in\Q$, and $E\subseteq\Q$; finally $E=\Q$ because $E$ is a field and contains $1\ne0$.
\end{proof}

\section{The complex field}\label{sec:C}

\subsection{A lattice order on \texorpdfstring{$\C$}{C}}\label{sub:mainC}

\begin{theorem}\label{thm:C}
The field $\C$ carries a lattice positive cone $P$ with
\begin{equation}\label{eq:Cprops}
1\in P,\qquad \Q_{>0}\subseteq P,\qquad P\cap\Q=\Q_{\ge0},\qquad
\{x\in P\setminus\{0\}:\ x^{-1}\in P\}=\Q_{>0},
\end{equation}
and, for $i$ a square root of $-1$,
\begin{equation}\label{eq:Ci}
0\vee i=\tfrac12(t+i),\qquad 0\vee(-i)=\tfrac12(t-i),
\end{equation}
where $t$ is the element of the transcendence basis used in the construction. In particular $\C$ is a lattice-ordered field, and the Birkhoff--Pierce question has a positive answer.
\end{theorem}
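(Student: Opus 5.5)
The plan is to obtain Theorem~\ref{thm:C} as a direct specialisation of Theorem~\ref{thm:main} to $L=\C$. First fix a transcendence basis $T$ of $\C$ over $\Q$; it is nonempty because $\C$ is uncountable while $\overline{\Q}$ is countable. Choose $t\in T$ and put $F_0=\Q(T\setminus\{t\})$. Lemma~\ref{lem:exist} then provides a valuation $v$ on $\C$ satisfying (H1)--(H3). It is trivial on $\Q^\times$ with value group $\Q$, it has $v(t)=-1$, and it comes with compatible rational powers $t^r$. Since $\C$ has characteristic $0$, all the standing assumptions of Sections~\ref{sec:val} and~\ref{sec:constr} are met.

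Next, $\C$ contains a square root $i$ of $-1$. So I apply Theorem~\ref{thm:main} in its second form, using seed (ii) of Lemma~\ref{lem:seeds} with this $i$ and the chosen $t$. That theorem yields a lattice positive cone $P$ with the properties \eqref{eq:props}, which are exactly \eqref{eq:Cprops}. It also yields $0\vee i=\tfrac12(t+i)$ and $0\vee(-i)=\tfrac12(t-i)$, which is \eqref{eq:Ci}. Here $t$ is precisely the element of the transcendence basis used to define $F_0$ and $v$, as the statement requires. The answer to Question~\ref{q:bp} follows because a lattice positive cone is by definition directed with a lattice order, so $(\C,P)$ is a lattice-ordered field.

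I do not expect a genuine obstacle here, since all the work is done in Lemma~\ref{lem:exist} and Theorem~\ref{thm:main}. The one point needing a remark is that the $t$ in \eqref{eq:Ci} must be the same $t$ satisfying (H2) in the construction. Lemma~\ref{lem:exist} is set up to use the basis element $t$ itself, so this holds. It would also be worth noting that $i\notin\Q$ and $d(i)=0$ (from the remark after the hypotheses). This is what Lemma~\ref{lem:seeds}(ii) uses, and it confirms that $i$ is incomparable with $0$, since neither $i$ nor $-i$ equals its own positive part.
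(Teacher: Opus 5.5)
Your proposal is correct and follows the paper's proof: fix $T$, $t\in T$ and $F_0=\Q(T\setminus\{t\})$, obtain (H1)--(H3) from Lemma~\ref{lem:exist}, and apply Theorem~\ref{thm:main} with seed~(ii) of Lemma~\ref{lem:seeds}. Your side remarks are correct but not needed for the statement: that $T$ is nonempty, and that $i$ is incomparable with $0$, which the paper records separately in Corollary~\ref{cor:partial}.
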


\begin{proof}
Fix a transcendence basis $T$ of $\C$ over $\Q$ and $t\in T$, and put $F_0=\Q(T\setminus\{t\})$. By Lemma~\ref{lem:exist}, $\C$ satisfies (H1)--(H3) with $v$ trivial on $F_0^{\times}$ and $v(t)=-1$; moreover $\C$ contains $i$ with $i^{2}=-1$. Theorem~\ref{thm:main} applied to $L=\C$ with the seed of Lemma~\ref{lem:seeds}(ii) now yields a lattice positive cone $P$ with the properties \eqref{eq:props} and \eqref{eq:i}, which are \eqref{eq:Cprops} and \eqref{eq:Ci}. This answers Question~\ref{q:bp} affirmatively.
\end{proof}

The construction distinguishes a whole-field positive cone $P_\alpha$ from its lattice restriction $A_\alpha$ on $U_\alpha$. The inclusions are
\[
U_\alpha\subseteq L_{\le a_\alpha},\qquad
L_{<a_\alpha}\subseteq U_\alpha\quad(\alpha>0),
\]
and equality with $L_{\le a_\alpha}$ is not asserted. For example, at a successor cut $h$, the subspace is $\Q t^h\oplus L_{<h}$; it does not contain $it^h$. Indeed, for every $q\in\Q$, $i-q$ is nonzero and algebraic over $\Q$, so $d((i-q)t^h)=h$. A stage involves specified sets and bases, which can be uncountable, even though the recursion itself has countable ordinal length.

\subsection{Non-totality and further properties}\label{sub:partial}

\begin{corollary}\label{cor:partial}
Let $p=\tfrac12(t+i)$ and $q=\tfrac12(t-i)$. Then $p,q\in P\setminus\{0\}$, $p-q=i$, $p\wedge q=0$ and
\[
i\notin P,\qquad -i\notin P.
\]
Consequently $i$ and $0$ are incomparable, and $\le_{P}$ is not a total order on $\C$.
\end{corollary}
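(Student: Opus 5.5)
The argument only needs the seed (ii) data together with the preservation of reduced decompositions established in Theorem~\ref{thm:main}. I first check that $p,q\in P\setminus\{0\}$. Both lie in $A_0=\Q_{\ge0}1+\Q_{\ge0}p+\Q_{\ge0}q$, and $A_0\subseteq P_0\subseteq P$. They are nonzero because $1,p,q$ are linearly independent by Lemma~\ref{lem:seeds}(ii). The identity $p-q=i$ follows directly from the definitions.

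Next I use reducedness. By Lemma~\ref{lem:seeds}(ii), $(p,q)$ is the reduced decomposition of $i$ with respect to $A_0$. The proof of Theorem~\ref{thm:main} shows that this decomposition remains reduced with respect to the final cone $P$. Since $P$ is a lattice cone, Theorem~\ref{thm:reduced}, in the equivalence (i)$\Leftrightarrow$(iv), gives $p\wedge q=0$. The same theorem also gives $0\vee i=p$ and $0\vee(-i)=q$.

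For $i\notin P$, I argue by contradiction. Suppose $i\in P$. Then $i$ is an upper bound of $0$ and $i$, so $i\ge 0\vee i=p$. This gives $i-p=-q\in P$. Together with $q\in P$ and pointedness, this forces $q=0$, which contradicts $q\ne0$. The case $-i\in P$ is symmetric: it yields $-i\ge q$, hence $-p\in P$ and $p=0$. An alternative route is available. By (I3), every nonzero element of $P$ is either a positive rational or has height at least $1$. But $d(\pm i)=0$ and $\pm i\notin\Q$, so $\pm i\notin P$. I will mention this route as a cross-check.

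Incomparability of $i$ and $0$ is then immediate, and non-totality follows. The main point needing care is that the reduced decomposition survives to the final $P$, but this is already part of Theorem~\ref{thm:main}, so no step should be a real obstacle.
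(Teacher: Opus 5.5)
Your proposal is correct and follows the paper's argument. Both use the global preservation of the seed decomposition $(p,q)$ from Theorem~\ref{thm:main}, together with Theorem~\ref{thm:reduced}, to obtain $0\vee i=p$, $0\vee(-i)=q$ and $p\wedge q=0$; your deduction $i\ge p\Rightarrow -q\in P$ is the paper's observation that $i\in P$ would force $0\vee i=i\ne p$, written out in full, and your cross-check via the support condition (I3) and $d(\pm i)=0$ is also valid.
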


\begin{proof}
By \eqref{eq:Ci} and Theorem~\ref{thm:reduced}, the pair $(p,q)$ is the reduced decomposition of $i$, so $p=0\vee i$ and $q=0\vee(-i)$. Since $p\ne i$ and $q\ne-i$, neither $i$ nor $-i$ is positive. Finally $p\wedge q=0$ because $p$ and $q$ are the positive and negative parts of $i$.
\end{proof}

\begin{remark}\label{rem:inversion}
Every cone in the recursion, and its final union, satisfies
\[
P\setminus\{0\}\subseteq\Q_{>0}\cup\bigcup_{s\in\Q_{\ge1}}H_s.
\]
Thus every \emph{positive} element of height zero is rational; elements of height zero in the field need not be rational. If a cone satisfying this support condition were closed under inversion of nonzero positive elements, it could contain no element of positive height: the inverse has negative height. The cone would therefore be contained in $\Q_{\ge0}$, and it could not be directed on $\C$. This obstruction uses the support condition and does not depend on the lattice property. The directed inversion-closed cones of \cite{WYZZ} consequently lie outside this support framework.
\end{remark}

\begin{example}\label{ex:three}
Let $p=(t+i)/2$, $q=(t-i)/2$, and $V_0=\Q1\oplus\Q p\oplus\Q q$. The restriction equality in Theorem~\ref{thm:union}, applied to the final union, gives
\[
P\cap V_0=\Q_{\ge0}1+\Q_{\ge0}p+\Q_{\ge0}q.
\]
Moreover the initial reduced decompositions are preserved globally. Thus $i=p-q$ has positive part $p$ and negative part $q$, and $p\wedge q=0$ although $p,q>0$. This is a coordinate lattice order on $V_0$, and the lattice operations there agree with those on $\C$.

The same coordinate order gives $1\wedge p=0$ with $p>0$. Thus $1$ is not a \emph{weak order unit}: a positive element $e$ has this property if $e\wedge x=0$ for $x\ge0$ implies $x=0$. Also
\[
i^+i^-=pq=\frac{t^2+1}{4}\ne0,
\]
since $t$ is transcendental over $\Q$. This agrees with the Birkhoff--Pierce criterion \cite[Theorem 15]{BP}: in a lattice-ordered ring with positive identity, $1$ is a weak order unit if and only if $x^+x^-=0$ for every $x$. In particular, the present field is not an almost $f$-ring, whose defining condition is $a\wedge b=0\Rightarrow ab=0$.
\end{example}

\paragraph{A further explicit positive part.}
Start with seed (ii) and choose $1<h<2$ at the first successor step. Since $d(it)=1<h$ and
$it\notin U_0=\Q1+\Q t+\Q i$, its coset can be included in the chosen basis of $L_{<h}/U_0$.
Indeed, $it=a+bt+ci$ with $a,b,c\in\Q$ would imply $t=(a+ci)/(i-b)\in\Q(i)$, contrary to transcendence.
Choose $x_{j_0}=it$. The new coordinate cone $A_1$ then has independent positive directions
$e_0=t^h$ and $e_{j_0}=t^h+it$. By Lemma~\ref{lem:linext}(a), the identity
$it=e_{j_0}-e_0$ is a reduced decomposition. Proposition~\ref{prop:succ} and
Theorem~\ref{thm:main} preserve it globally, giving
\[
0\vee_P(it)=t^h+it,\qquad 0\vee_P(-it)=t^h.
\]
These formulae refer to the indicated choice of basis. Moreover, for every rational $s$ with
$1<s<h$, we have $t^s\in U_1$ but $t^s\notin A_1$: nonzero elements of $A_1$ have height
$0$, $1$ or $h$. Since the final restriction remains $P\cap U_1=A_1$, we have $t^s\notin P$.

A lattice-ordered field in which all squares are nonnegative is totally ordered; see \cite[p.~59]{BP} for the commutative case and \cite{Yang07} for division rings.

The restriction to the algebraic numbers can be computed exactly:
\[
P\cap\overline{\Q}=\Q_{\ge0},\qquad
(P\cap\overline{\Q})-(P\cap\overline{\Q})=\Q.
\]
Indeed, every nonzero algebraic number has height zero, by the observation after (H1)--(H3), and the support condition makes every positive element of height zero a positive rational. Conversely, all nonnegative rationals belong to $P$. Hence the restricted order on $\overline{\Q}$ is not directed, consistently with \cite{Ma}. Corollary~\ref{cor:total} further shows that the induced order is total on no subfield larger than $\Q$.

\subsection{Relation with earlier work}\label{sub:previous}

Schwartz and Yang established existence results for directed ring orders \cite{SY,SY23}. We now compare the structural properties of the present lattice cones with those of earlier constructions.

The Hahn-type embedding theorem of Conrad and Dauns \cite[Theorem III]{CD} assumes a finite lattice-group basis, that the special positive elements form a multiplicative group, and that the associated value group is torsion free. Its target field admits a compatible total order \cite[Proposition 2.2]{CD}, so it is formally real. There is no field embedding of $\C$ into a formally real field: the image of $i$ would have square $-1$. Consequently the present lattice-ordered field is outside the class covered by that embedding theorem.

For a prescribed total order on a subfield $F$, the condition $F_{>0}P\subseteq P$, together with $1\in P$, implies $F_{\ge0}\subseteq P$. Pointedness then gives $P\cap F=F_{\ge0}$. Conversely, a multiplicatively closed cone containing $F_{\ge0}$ is stable under multiplication by $F_{>0}$. Since the present cone contains $1$, Corollary~\ref{cor:total} excludes this compatibility for every $F\ne\Q$. In particular, none of the non-Archimedean coefficient fields prescribed in \cite{MWZ17,MWZ18,MWZ19,XZ20} can serve as such an $F$. Those classification results, including the quaternion results, concern the fixed $F$-algebra order framework and do not rule out our ring-order construction.

For the construction in \cite{WYZZ}, let $B$ be a transcendence basis of $\R$ over $\Q$, and let $\theta\in B$ be a distinguished real generator. Put $k=\Q(B\setminus\{\theta\})$ and $z=\theta^{-1}$, and let $\mathcal O$ be the integral closure of $k[z]_{(z)}$ in $\C$. Writing $\mathbb A=\overline{\Q}\cap\R$, the cone considered there is
\[
P_{\mathrm W}=\{0\}\cup\bigcup_{n\in\Z}z^{-n}(\mathbb A_{>0}+z\mathcal O).
\]
It is directed and closed under inversion of every nonzero positive element, and satisfies $P_{\mathrm W}\cap\overline{\Q}=\mathbb A_{\ge0}$. It is not a lattice cone. Both its restriction to $\overline{\Q}$ and its inversion property differ from those of the cone constructed here. For example, our seed (ii) gives $t=p+q\in P$, whereas $t^{-1}\notin P$ by its negative height.

The orders constructed here are not isomorphic, as ordered fields, to those arising from the ultrapower construction in \cite[Propositions~7.3 and~8.3]{He26}. Let $K$ and $\mathcal U$ be the countable algebraically closed field and the nonprincipal ultrafilter used there. The ultrapower $K^{\N}/\mathcal U$ contains the subfield $\Q^{\N}/\mathcal U$, and the induced order on this subfield is total. For each $r\in\Q$, the set of positive integers $n$ with $n>r$ is cofinite. Thus $u=[(n)_{n\ge1}]$ is greater than every element of the diagonal copy of $\Q$, so $\Q^{\N}/\mathcal U$ properly contains that copy. Transporting the order through the field isomorphism $\Phi:K^{\N}/\mathcal U\to\C$ therefore gives a totally ordered subfield $F=\Phi(\Q^{\N}/\mathcal U)$ properly containing $\Q$. If a field automorphism $\sigma$ of $\C$ carried one of our cones onto a cone obtained by that construction, then $\sigma^{-1}(F)$ would be a totally ordered subfield properly containing $\Q$, since $\sigma$ fixes $\Q$. This contradicts Corollary~\ref{cor:total}.

Theorem~\ref{thm:main} uses only (H1)--(H3); the element $i$ is needed only for the specified decomposition of $i$. The construction also applies to $\R$. Choose a transcendence basis of $\R$ over $\Q$ containing a positive element $t$, extend the valuation at infinity to the algebraic extension $\R/\Q(T)$, and take $t^r$ to be its usual positive real rational powers. The polynomial-relation argument in Lemma~\ref{lem:exist} embeds the value group in $\Q$, while these powers give every rational value and satisfy (H3). Seed (i) then gives a non-total lattice positive cone on $\R$, recovering the existence part of Wilson's theorem \cite{W} by this method.

\begin{corollary}\label{cor:many}
There are exactly $2^{\mathbf c}$ lattice positive cones on $\C$, where $\mathbf c=|\C|$. Already with a fixed transcendence basis $T$, distinguished element $t\in T$, valuation and compatible powers as in Lemma~\ref{lem:exist}, this many cones can be constructed satisfying \eqref{eq:Cprops} and \eqref{eq:Ci}. The count is of distinct subsets of $\C$, not of isomorphism classes of ordered fields.
\end{corollary}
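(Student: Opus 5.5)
The upper bound is immediate: a positive cone is a subset of $\C$, so there are at most $2^{|\C|}=2^{\mathbf c}$ of them. The work is the lower bound. We fix $T$, $t$, $v$ and the powers $t^r$, start from seed (ii), and produce $2^{\mathbf c}$ pairwise distinct cones satisfying \eqref{eq:Cprops} and \eqref{eq:Ci}. As the abstract indicates, we vary only the first successor step.

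\emph{Setting up the first step.} Take the initial configuration of seed (ii) with $a_0=1$, $S_0=\N_0$, $b=2$. Fix a rational $h$ with $1<h<2$, say $h=3/2$. Then $U'=\Q t^h\oplus L_{<h}$, and the cone $A'$ of Proposition~\ref{prop:succ} depends on a chosen family of representatives $x_j$ of a basis of $L_{<h}/U_0$.

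First we show $\dim_\Q L_{<h}/U_0=\mathbf c$. The quotient has cardinality at most $|\C|=\mathbf c$. For the lower bound, $F_0\subseteq L_{<h}$, since $v$ is trivial on $F_0^\times$, and $|F_0|=\mathbf c$ because $|T|=\mathbf c$. Moreover $U_0$ is finite-dimensional.

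Next fix one basis $(y_j+U_0)_{j\in J}$ with $|J|=\mathbf c$. For each subset $X\subseteq J$ define new representatives
\[
x_j^X=\begin{cases} y_j, & j\notin X,\\ y_j+\tfrac12 u_j, & j\in X,\end{cases}
\]
where each $u_j$ lies in $U_0$ and is chosen so that the two options produce different cones. A convenient choice is to keep the basis fixed and replace $y_j$ by $-y_j$ for $j\in X$; this is again a basis. The substitution $x_j\mapsto x_j+u$ with $u\in U_0$ is also available and is still a set of representatives. Each choice is a legitimate input to Proposition~\ref{prop:succ}. After it, we run the remaining recursion of Theorem~\ref{thm:main} with arbitrary choices, and obtain a final cone $P^X$ satisfying \eqref{eq:Cprops} and \eqref{eq:Ci}.

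\emph{Distinctness.} The restriction equality persists: Theorem~\ref{thm:union}, applied to the final union, gives $P^X\cap U_1=A_1^X$, with $U_1=U'$ independent of $X$. So it suffices that $A_1^X\ne A_1^Y$ for $X\ne Y$.

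Suppose $j\in X\setminus Y$ and we use the sign-flip version. Then $t^h+y_j\in A_1^Y$. In $A_1^X$, the directions are $t^h$ and $t^h-y_j$, plus the other $t^h\pm y_k$ and $A_0$. Write $t^h+y_j$ in the coordinates of \eqref{eq:decomp} for $X$:
\[
t^h+y_j=-(t^h-y_j)+2t^h .
\]
The coefficient on the direction $t^h-y_j$ is $-1<0$. The coordinates are unique by the directness in \eqref{eq:decomp}, and positivity in a coordinate cone means all new coordinates are nonnegative (Lemma~\ref{lem:linext}(a)). Hence $t^h+y_j\notin A_1^X$. The map $X\mapsto P^X$ is therefore injective on $2^J$, which has cardinality $2^{\mathbf c}$.

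\emph{Where the difficulty lies.} The main obstacle is the cardinality claim $\dim_\Q L_{<h}/U_0=\mathbf c$. It needs $|F_0|=\mathbf c$, that is, $|T|=\mathbf c$, which holds because $\C$ is algebraic over $\Q(T)$ and so $|\Q(T)|=|\C|$.

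The remaining points are routine checks:
\begin{itemize}
\item the final restriction to $U_1$ is exactly $A_1^X$, independent of the later arbitrary choices, which follows from (L2) and Theorem~\ref{thm:union};
\item the cones $P^X$ are distinct as subsets, not merely as ordered fields, which is all the corollary asserts.
\end{itemize}
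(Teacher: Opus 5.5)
Your proposal is correct and follows the paper's proof. The shared ingredients are seed (ii), a first successor step with $1<h<2$, a basis of $L_{<h}/U_0$ of dimension $\mathbf c$, sign flips $\pm x_j$ of the representatives, and the trivial upper bound $2^{\mathbf c}$. The differences are cosmetic: you detect distinctness through the persistent restriction $P^X\cap U_1=A_1^X$ rather than through the globally preserved positive parts $x_j^+$, and you obtain $\dim_\Q L_{<h}/U_0=\mathbf c$ from $|F_0|=\mathbf c$ by a cardinality count rather than from the linear independence of $T\setminus\{t\}$. The $y_j+\tfrac12u_j$ variant is unnecessary; it would also work, but only if $u_j\notin A_0$.
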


\begin{proof}
Use seed (ii), and fix a rational $h$ with $1<h<2$ for the first successor step. Choose representatives $(x_j)_{j\in J}$ of a basis of $L_{<h}/U_0$, where $L=\C$ and $U_0=\Q1\oplus\Q p\oplus\Q q$. The set $T\setminus\{t\}$ has cardinality $\mathbf c$, consists of elements of height zero, and is linearly independent over $\Q$. Thus $L_{<h}$ has dimension $\mathbf c$ over $\Q$. Quotienting by the finite-dimensional subspace $U_0$ leaves dimension $\mathbf c$, so $|J|=\mathbf c$.

For each sign family $\varepsilon=(\varepsilon_j)_{j\in J}\in\{1,-1\}^{J}$, use the basis representatives $\varepsilon_jx_j$ in Proposition~\ref{prop:succ}. The new coordinate vectors are
\[
e_0=t^h,\qquad e_j=t^h+\varepsilon_jx_j.
\]
Since $x_j=\varepsilon_j(e_j-e_0)$, Lemma~\ref{lem:linext}(a) gives
\[
x_j^+=
\begin{cases}
t^h+x_j,&\varepsilon_j=1,\\
t^h,&\varepsilon_j=-1.
\end{cases}
\]
Continue from each such full configuration by the recursion of Theorem~\ref{thm:main}. Its proof uses only the configuration and preservation properties at the first successor stage, so every one of these choices extends to a final lattice positive cone $P_\varepsilon$ with \eqref{eq:Cprops} and \eqref{eq:Ci}. The displayed positive parts are preserved globally at all later stages. Since $x_j\ne0$, different sign families give different positive-part maps, hence different cones. There are $2^{|J|}=2^{\mathbf c}$ such families. The reverse bound follows because every cone is a subset of $\C$.
\end{proof}

Choosing any prescribed transcendental real number as $t$, for example $t=\pi$, gives the specified formula $0\vee i=(\pi+i)/2$. The resulting cone still depends on the remaining construction choices.

The maximal totally ordered subfield and the multiplication operators preserving disjointness are studied by Schwartz \cite{Sch86} and Redfield \cite{Red01}. For the present cone, Corollary~\ref{cor:total} identifies the unique maximal totally ordered subfield as $\Q$. The following proposition computes the $d$- and $f$-elements separately; for these standard terms, see \cite{Ste}.

\begin{proposition}\label{prop:multipliers}
Let $P$ be a cone supplied by Theorem~\ref{thm:C}. Define its sets of $d$-elements and $f$-elements by
\[
\begin{aligned}
\mathcal D(P)&=\{c\in P: ca\wedge cb=0\text{ whenever }a\wedge b=0\},\\
\mathcal F(P)&=\{c\in P: ca\wedge b=0\text{ whenever }a\wedge b=0\},
\end{aligned}
\]
where $a,b$ range over $\C$ and the meets are taken with respect to $P$. Then
\[
\mathcal D(P)=\mathcal F(P)=\Q_{\ge0}.
\]
\end{proposition}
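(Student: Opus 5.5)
The plan is to prove two inclusions: first $\Q_{\ge0}\subseteq\mathcal F(P)\cap\mathcal D(P)$, and then that any $c\in P$ outside $\Q_{\ge0}$ fails to be a $d$-element and fails to be an $f$-element. For the easy inclusion, take $c=r\in\Q_{\ge0}$. If $r=0$, then every product is $0$ and $0\wedge b=0$ for $b\ge0$. We need $b\ge0$ here; indeed $a\wedge b=0$ forces $a,b\ge0$, since $0=a\wedge b\le a,b$. If $r>0$, multiplication by $r$ is an order automorphism, because $rP=P$ by rational closure (Lemma~\ref{lem:latsat}). Hence $ra\wedge rb=r(a\wedge b)=0$. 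For the $f$-property, $0\le ra\wedge b\le \max(r,1)a\wedge\max(r,1)b=\max(r,1)(a\wedge b)=0$.

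For the converse, let $c\in P\setminus\Q$. By (I3), or Remark~\ref{rem:inversion}, $c\in H_s$ for some $s\ge1$, so $d(c)=s\ge1$. I would use the explicit disjoint pair from Example~\ref{ex:three}: $p\wedge q=0$ with $p=(t+i)/2$ and $q=(t-i)/2$, and also $1\wedge p=0$.

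\emph{$f$-elements.} Take $a=1$ and $b=p$. Then $ca\wedge b=c\wedge p$, and we must show this is nonzero. I would first try the pair $a=p$, $b=1$ instead. Then $cp\wedge 1$ should be nonzero, since $1$ is a positive rational and $cp\in H_{s+1}$ is a positive element of height $>1$. The key claim is that $1\le x$ for every $x\in P$ with $d(x)>1$. This amounts to showing $x-1\in P$. Since $x\in H_r$ with $r>1$, $r\in S$ at the stage where $x$ appears. I would argue that $x-1\in H_r$, because $1$ has height $0<r$ and $H_r+L_{<r}=H_r$. Membership of $x-1$ in $P$ then follows when $r$ exceeds the cut at some stage, since whole layers above the cut lie in $P_\beta$. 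The subtle case is when $x$ lies in a lattice part $A_\beta$ at height equal to the cut. To handle it, I would pass to the next stage: positive elements of height $r$ in $A_\beta$ lie in $H_r$ by (I3), and in $A'$ the set $A'\setminus A\subseteq H_h$ is obtained by adding $A$-elements. More simply, $x-1\in H_r$, and I need $H_r\cap P$-membership. So I would show that for $r\in S$, with $r$ a height realised by a positive element, $y\in H_r$ and $y-x\in L_{<r}$ (with $x\in P$) implies nothing in general. This is the main obstacle: the cone is only partially lexicographic inside $U_\beta$.

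To avoid that obstacle, I would instead use the final stage containing $c$ in its higher layers. Multiply to raise heights: $c^2p\in H_{2s+1}$, and $2s+1\ge3>a_0$. Choose $n$ with $ns+1$ greater than the cut of the successor stage where $c$ first enters. I expect that all of $H_{ns+1}$ lies in $P$ once $ns+1\in S$ exceeds the cut, whereas $c^np-1\in H_{ns+1}$. Then $1\le c^np$, so $1\wedge c^np=1\ne0$. This is enough for $\mathcal F$, because an $f$-element $c$ gives $c^n$ also an $f$-element: the conditions compose, since $c a\wedge b=0$ for all disjoint $a,b$, and iterating with $a\wedge b=0\Rightarrow ca\wedge b=0$ yields $c^na\wedge b=0$. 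This contradicts $p\wedge1=0$. The same iteration handles $\mathcal D$: $c^np\wedge c^n1$ would be $0$. On the other hand, $c^np\ge c^n\cdot$? To conclude here I would compare $c^np$ and $c^n$ against $c^{n}\cdot 1\wedge c^n p\ge$ a common lower bound, say $1$ for large $n$ by the same layer argument applied to both, giving a contradiction with $0$. Verifying the layer-membership claim is where I expect to spend the effort.
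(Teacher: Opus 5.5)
Your final argument is correct, but it takes a different route from the paper's.

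\textbf{The paper's route.} It argues structurally. For $0\ne c\in\mathcal D(P)$, disjointness of $cx^+$ and $cx^-$ gives $(cx)^+=cx^+$ by Theorem~\ref{thm:reduced}. So $x\mapsto cx$ is a bijective lattice homomorphism with order-preserving inverse, whence $c^{-1}\ge0$. The inversion property in \eqref{eq:Cprops} then forces $c\in\Q_{>0}$. Finally $\mathcal F(P)\subseteq\mathcal D(P)$ follows by applying the $f$-condition twice.

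\textbf{Your route.} You iterate the $d$- or $f$-condition to $c^n$ and apply it to the disjoint pair $1\wedge p=0$. You then use the stage structure to obtain the lower bounds $1\le c^n$ and $1\le c^np$, which contradict $c^n\wedge c^np=0$ and $c^np\wedge1=0$.

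\textbf{The step you flag as the main effort is immediate.} Take any stage $\beta$ with $c\in P_\beta$; this may be the seed stage, not necessarily a successor stage.
\begin{itemize}
\item By (I3), $c\in H_s$ with $s\in S_\beta$ and $s\ge1$.
\item Since also $1\in S_\beta$, for $ns>a_\beta$ the whole layers $H_{ns}$ and $H_{ns+1}$ lie in $P_\beta$ by the definition of $P_\beta$.
\item By Lemma~\ref{lem:layers}(d), $c^n-1\in H_{ns}$ and $c^np-1\in H_{ns+1}$.
\end{itemize}

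\textbf{Passing to high powers is essential.} The ``key claim'' you abandoned is false. At the first successor stage, $t^h\in P$ has height $h>1$, yet $t^h\wedge1=0$, because $1$ and $t^h$ are independent coordinates of $A_1$ and $P\cap U_1=A_1$.

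\textbf{What each approach buys.} The paper's proof uses only $1\in P$, rational closure and the inversion property. It therefore applies to any lattice positive cone with those properties and needs no knowledge of the construction. Yours depends on the internal layer structure and on the seed-(ii) pair $1,p$. In exchange, it exhibits explicit disjoint pairs witnessing the failure and treats $\mathcal F$ directly, without the inclusion $\mathcal F\subseteq\mathcal D$.
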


\begin{proof}
Let $0\ne c\in\mathcal D(P)$ and set $M_c(x)=cx$. For every $x$, the positive elements $cx^+$ and $cx^-$ are disjoint and have difference $cx$. Theorem~\ref{thm:reduced} therefore gives
\[
M_c(x^+)=(cx)^+.
\]
Since $M_c$ is additive, \eqref{eq:supinf} shows that it preserves joins and meets. It is also bijective, so its inverse is order preserving. As $1>0$, this gives $c^{-1}=M_c^{-1}(1)>0$. The inverse condition in \eqref{eq:Cprops} now implies $c\in\Q_{>0}$. Thus $\mathcal D(P)\subseteq\Q_{\ge0}$.

If $c\in\mathcal F(P)$ and $a\wedge b=0$, first $ca\wedge b=0$, and then, applying the same condition to $b$ and $ca$, we obtain $cb\wedge ca=0$. Hence $\mathcal F(P)\subseteq\mathcal D(P)$.

Finally let $r\in\Q_{>0}$ and $a\wedge b=0$. Multiplication by any positive rational is an order automorphism, because $P$ is rationally closed. Choose an integer $N\ge\max\{1,r\}$. Since $a,b\ge0$,
\[
0\le ra\wedge b\le Na\wedge Nb=N(a\wedge b)=0.
\]
Thus $r\in\mathcal F(P)$, and $0\in\mathcal F(P)$ is immediate. Combining the inclusions proves the assertion.
\end{proof}

\subsection{Definability and classification questions}\label{sub:final}

We first consider definability of directed positive cones in the pure field language.

\begin{proposition}\label{prop:nondefinable}
No directed positive cone on $\C$ is definable, with parameters, in the pure field structure $(\C;+,-,\cdot,0,1)$.
\end{proposition}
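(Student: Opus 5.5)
Suppose, for a contradiction, that $P$ is a directed positive cone on $\C$ that is definable in $(\C;+,-,\cdot,0,1)$ with a finite tuple of parameters $\bar c$. The plan is to play the large automorphism group of $\C$ fixing $\bar c$ against directedness, and to end up contradicting Ma's obstruction \cite{Ma}.

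Let $K$ be the algebraic closure in $\C$ of $\Q(\bar c)$. Then $K$ is countable and algebraically closed. Every field automorphism $\sigma$ of $\C$ that fixes $\bar c$ pointwise preserves every set definable from $\bar c$, so $\sigma(P)=P$.

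The first step is to show $P\cap(\C\setminus K)=\emptyset$. Take $x\notin K$ with $x\in P$. Then $x$ is transcendental over $K$. I would first check that $-x\notin K$, since $K$ is a field, so $-x$ is also transcendental over $K$. Extend $\{x\}$ to a transcendence basis of $\C$ over $K$. The $K$-automorphism of $K(x)$ sending $x\mapsto -x$ extends to an automorphism of the algebraic closure of $K(\text{basis})$, and that algebraic closure is all of $\C$. This gives $\sigma\in\mathrm{Aut}(\C/K)$ with $\sigma(x)=-x$. Then $-x=\sigma(x)\in\sigma(P)=P$, and pointedness forces $x=0\in K$, a contradiction. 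Hence $P\subseteq K$.

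The second step uses directedness. Since $P-P=\C$, we would have $\C\subseteq K-K=K$. But $K$ is countable and $\C$ is not, so this is already a contradiction. Ma's result \cite{Ma} is therefore not even needed: the contradiction comes from cardinality alone.

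The only delicate point I see is the extension of automorphisms. The map $x\mapsto -x$ must extend to all of $\C$ while fixing $K$ pointwise. This is standard: $\C$ is an algebraic closure of $K(B)$, where $B$ is a transcendence basis over $K$ containing $x$. Define $\tau$ on $K(B)$ by $x\mapsto -x$ and the identity on $B\setminus\{x\}$ and on $K$. This $\tau$ is a well-defined field automorphism of $K(B)$, because $-x$ together with $B\setminus\{x\}$ is again algebraically independent over $K$. Finally, $\tau$ extends to the algebraic closure by uniqueness of algebraic closures. Definability with parameters then enters only through invariance under $\mathrm{Aut}(\C/\bar c)$, which holds for any first-order formula.
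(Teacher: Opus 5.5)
Your argument is correct, but it takes a different route from the paper. The paper cites quantifier elimination for algebraically closed fields: every subset of $\C$ definable with parameters is finite or cofinite. A directed positive cone is infinite, since it contains some $p\ne0$ and all $np$. It also cannot be cofinite, since then $P\cap(-P)$ would be infinite.

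You instead use only that a set definable from $\bar c$ is invariant under $\operatorname{Aut}(\C/K)$. You combine this with the fact that every $x$ transcendental over $K$ is sent to $-x$ by some such automorphism. Pointedness then forces $P\subseteq K$, and directedness gives $\C=P-P\subseteq K$, which is impossible. Your extension of $x\mapsto -x$ from $K(B)$ to $\C$ is handled correctly.

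The paper's proof is shorter, given the cited model-theoretic theorem, and it shows at once that no infinite coinfinite subset of $\C$ is definable. Yours needs only transcendence bases and the extension of isomorphisms to algebraic closures. It also proves more: no pointed set with $P-P=\C$ is invariant under $\operatorname{Aut}(\C/K)$ for any algebraically closed subfield $K\ne\C$. So it rules out any automorphism-invariant notion of definability over fewer than $\mathbf c$ parameters, for instance infinitary formulas. There the finite--cofinite dichotomy fails; for example $\overline{\Q}$ is definable by a countable disjunction. Neither argument uses multiplicative closure or the lattice property.
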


\begin{proof}
Quantifier elimination for algebraically closed fields implies that a definable subset of the one-dimensional field sort, with parameters, is finite or cofinite; see \cite[Theorem 3.13 and Example 4.8]{Deloro}. A directed positive cone $P$ is infinite: directedness gives a nonzero $p\in P$, and its positive integer multiples are distinct elements of $P$. It cannot be cofinite either, because $P$ and $-P$ would then have an infinite intersection, contrary to $P\cap(-P)=\{0\}$. Thus $P$ is not definable in that language.
\end{proof}

Proposition~\ref{prop:nondefinable} concerns the pure field language; it does not rule out definitions in expanded structures. Corollary~\ref{cor:many} counts cones as subsets of $\C$, not ordered fields up to isomorphism. It remains to determine when the lattice cones obtained here define isomorphic ordered fields.

\section*{AI Use Disclosure}
\addcontentsline{toc}{section}{AI Use Disclosure}

\par\noindent\hspace*{2em}By \hyperref[cor:lattice-crit]{Corollary~\ref*{cor:lattice-crit}}, a pointed cone induces a lattice order exactly when every element admits a reduced decomposition. This motivates extending the cone in stages to obtain such decompositions throughout the field, while maintaining pointedness, directedness and multiplicative closure and ensuring that all previously established decompositions remain reduced.
\par

GPT-6-astra played an important role in exploring approaches to extending lattice-ordered subspaces while preserving reduced decompositions, leading to the use of coordinate and lexicographic extensions, and assisted in checking the proof of their preservation properties (\hyperref[lem:linext]{Lemma~\ref*{lem:linext}}). It also assisted in developing the choice of $t^h$ and $t^h+x_j$ as new independent positive coordinate directions in \hyperref[prop:succ]{Proposition~\ref*{prop:succ}}, and in checking the recursion arguments, particularly at limit stages (\hyperref[prop:limit]{Proposition~\ref*{prop:limit}} and the proof of \hyperref[thm:main]{Theorem~\ref*{thm:main}}). It also assisted with language polishing and copyediting during manuscript preparation. The authors take full responsibility for the paper's final content.

\begin{flushleft}
\small
\textit{Hanzhen Bao, KuangShi College, Shanghai University of
Finance and Economics, Shanghai, China.}\par
\textit{E-mail:} \href{mailto:q3556593474q@gmail.com}{\nolinkurl{q3556593474q@gmail.com}}

\textit{Yuehui Zhang, School of Mathematical Sciences, Shanghai Jiao Tong University,
800 Dongchuan Road, Shanghai 200240, China.}\par
\textit{E-mail:} \href{mailto:zyh@sjtu.edu.cn}{\nolinkurl{zyh@sjtu.edu.cn}}
\end{flushleft}
\end{document}